\newif\ifpaper
\title {Variation of GIT and Variation of Lagrangian Skeletons I:  Flip and Flop}
\author{Peng Zhou}
\date{\today}
\renewcommand{\ss}{\subsection}
\newcommand{\sss}{\subsubsection}
\DeclareMathOperator{\cone}{cone}
\DeclareMathOperator{\Coh}{Coh}
\DeclareMathOperator{\End}{End}
\DeclareMathOperator{\Fuk}{Fuk}
\DeclareMathOperator{\Hom}{Hom}
\newcommand{\uhom}{{\hcal  om}}
\DeclareMathOperator{\Perf}{Perf}
\DeclareMathOperator{\spec}{Spec}
\DeclareMathOperator{\Skel}{Skel}
\DeclareMathOperator{\Vect}{Vect}
\newcommand{\NN}{\mathsf{N}}
\newcommand{\MM}{\mathsf{M}}
\newcommand{\dm}{\diamondsuit}
\newcommand{\z}{\text}
\newcommand{\qo}{/\,}
\newcommand{\pa}{\partial}
\newcommand{\la}{\langle}
\newcommand{\ra}{\rangle}
\newcommand{\ot}{\otimes}
\renewcommand{\b}{\bar}
\newcommand{\wt}{\widetilde}
\def\bfone{{\bf 1}}
\def\bfx{{\bf x}}
\newcommand{\acal}{\mathcal{A}}
\newcommand{\bcal}{\mathcal{B}}
\newcommand{\ccal}{\mathcal{C}}
\newcommand{\dcal}{\mathcal{D}}
\newcommand{\fcal}{\mathcal{F}}
\newcommand{\gcal}{\mathcal{G}}
\newcommand{\hcal}{\mathcal{H}}
\newcommand{\ical}{\mathcal{I}}
\newcommand{\jcal}{\mathcal{J}}
\newcommand{\lcal}{\mathcal{L}}
\newcommand{\ocal}{\mathcal{O}}
\newcommand{\pcal}{\mathcal{P}}
\newcommand{\tcal}{\mathcal{T}}
\newcommand{\wcal}{\mathcal{W}}
\newcommand{\xcal}{\mathcal{X}}
\newcommand{\ycal}{\mathcal{Y}}
\newcommand{\C}{\mathbb{C}}
\newcommand{\R}{\mathbb{R}}
\newcommand{\Z}{\mathbb{Z}}
\newcommand{\T}{\mathbb{T}}
\renewcommand{\P}{\mathbb{P}}
\newcommand{\xto}{\xrightarrow}
\newcommand{\RM}{\backslash}
\newcommand{\into}{\hookrightarrow}
\newcommand{\onto}{\twoheadrightarrow}
\newcommand{\bea}{\begin{eqnarray*} }
\newcommand{\eea}{\end{eqnarray*} }
\newcommand{\be}{\begin{equation} }
\newcommand{\ee}{\end{equation} }
\newcommand{\bp}{\begin{proposition}}
\newcommand{\ep}{\end{proposition}}
\newcommand{\bt}{\begin{theo}}
\newcommand{\et}{\end{theo}}
\newcommand{\btu}{\begin{theou}}
\newcommand{\etu}{\end{theou}}
\newcommand{\bpf}{\begin{proof}}
\newcommand{\epf}{\end{proof}}
\newcommand{\bl}{\begin{lemma}}
\newcommand{\el}{\end{lemma}}
\newcommand{\bc}{\begin{corollary}}
\newcommand{\ec}{\end{corollary}}
\newcommand{\bd}{\begin{definition}}
\newcommand{\ed}{\end{definition}}
\newcommand{\bex}{\begin{example}}
\newcommand{\eex}{\end{example}}
\newcommand{\bA}{\left(\begin{array}}
\newcommand{\eA}{\end{array}\right)}
\newcommand{\bma}{\begin{bmatrix}}
\newcommand{\ema}{\end{bmatrix}}
\newcommand{\bcd}{\begin{tikzcd}}
\newcommand{\ecd}{\end{tikzcd}}
\newcommand{\bcs}{\begin{cases}}
\newcommand{\ecs}{\end{cases}}
\newcommand{\bee}{\begin{eqnarray} }
\newcommand{\eee}{\end{eqnarray} }
\newcommand{\brem}{\begin{remark}}
\newcommand{\erem}{\end{remark}}
\newcommand{\bnum}{\begin{enumerate}}
\newcommand{\enum}{\end{enumerate}}
\newtheorem*{theou}{Main Theorem}
\newtheorem{theo}{Theorem}[section]
\newtheorem{lemma}[theo]{Lemma}
\newtheorem{corollary}[theo]{Corollary}
\newtheorem{proposition}[theo]{Proposition}
\newtheorem{definition}[theo]{Definition}
\newtheorem{remark}[theo]{Remark}
\newenvironment{example}
  {\pushQED{\qed}\examplex}
  {\popQED\endexamplex}
\theoremstyle{plain}
\numberwithin{equation}{section}
\newcommand{\CS}{{\C^*}}
\newcommand{\La}{\Lambda}
\newcommand{\In}{\subset}
\newcommand{\congto}{\xto{\sim}}
\renewcommand{\cong}{\simeq}
\newcommand*{\hrlen}{5}
\newcommand*{\hramp}{3}
\tikzset{
asdstyle/.style={blue,thick},
righthairs/.style={postaction={decorate,draw,decoration={border,amplitude=\hramp,segment length=\hrlen,angle=-90,pre=moveto,pre length=\hrlen/2}}},
lefthairs/.style={postaction={decorate,draw,decoration={border,amplitude=\hramp,segment length=\hrlen,angle=90,pre=moveto,pre length=\hrlen/2}}},
righthairsnogap/.style={postaction={decorate,draw,decoration={border,amplitude=\hramp,segment length=\hrlen,angle=-90}}},
lefthairsnogap/.style={postaction={decorate,draw,decoration={border,amplitude=\hramp,segment length=\hrlen,angle=90}}},
graphstyle/.style={thick},
arrowstyle/.style={thick,decorate,decoration={snake,amplitude=1.7,segment length=10pt,post length=.5mm,pre length=0}},
genmapstyle/.style={thick,-stealth'},
arrhdstyle/.style={thick},
exceptarcstyle/.style={red, ultra thick},
dualquiverstyle/.style={thick,->},
patstyle/.style={pattern color = gray, pattern = north east lines, opacity=0.3}
}
\begin{document}
\maketitle
\begin{abstract}
Coherent-Constructible Correspondence for toric variety assigns to each $n$-dimensional toric variety $X_\Sigma$  a Lagrangian skeleton $\La_\Sigma \In T^*T^n$,  such that the derived category of coherent sheaves $Coh(X_\Sigma)$ is equivalent to the (wrapped) constructible sheaves $Sh^w(T^n, \La_\Sigma)$. In this paper, we extend this correspondence, so that flip and flop between toric varieties corresponds to variation of Lagrangian skeletons. The main idea is to translate window subcategory in variation of GIT to a window skeleton.

\end{abstract}

\section*{Introduction}

\subsection{Variation of Skeletons}
Constructible sheaf has a nice balance between rigidity and flexibility. On one hand, to specify a constructible sheaf $F$ on a manifold $M$ with stratification  $M = \sqcup_\alpha S_\alpha$, one only needs to specify locally constant sheaf on each strata and glue them together. On the other hand, constructible sheaves on $M$ is equivalent to Lagrangians in $T^*M$ \cite{NZ, nadler2009microlocal, GPS3}. And all the flexibility of symplectic geometry, e.g. Hamiltonian isotopy, can be enjoyed by the constructible sheaves\cite{GKS2012sheaf}.

One manifestation of this flexibility is the existence of non-characteristic (NC) variation of skeleton, i.e, a family of Lagrangian skeleton $\La_t \in T^*M$ parameterized by $t$, such that the category of constructible sheaves with singular support in $\La_t$, denoted as $Sh^\dm(M, \La_t)$,  are invariant. 

A natural source of NC variation of skeleton is   isotopy of Weinstein pair $(W, \hcal_t \in \pa W)$ \cite{GPS2, zhou2018sheaf}, where $W$ is a Weinstein domain (here $W = D^*M$ the unit disk bundle in $T^*M$), $\hcal_t$ is a family of Weinstein hypersurfaces in the contact boundary $\pa W$, and the family of skeletons arises as 
$$  \Skel(W, \hcal_t) = \Skel(W) \sqcup \\R_{>0} \cdot ( \Skel(\hcal_t)), $$
where $\R_{>0}$ acts by retracting Liouville flow. 
However, in practice it is often hard to write down the skeleton of a Weinstein pair and its variations. 

In this paper, we obtain a variation of skeleton not from symplectic geometry, but from a toric variation of GIT and coherent constructible correspondence (CCC). Here we only consider the simplest case of $\C^*$ acting on $\C^N$, and leave the general case for future work. There are two quotients toric smooth Deligne-Mumford stack $\xcal_\pm$, and correspondingly two skeletons $\La_\pm \In T^* \T^{N-1}$, where $\T = S^1$, with 
\be \label{eq:CCCpm} \Coh(\xcal_\pm) \cong Sh^w(\T^{N-1}, \La_\pm), \ee
Our goal is to find a variation of skeleton that interpolates $\La_\pm$, then get the semi-orthogonal decomposition relation between $Coh(\xcal_\pm)$. The answer comes naturally by translating the window subcategory in variation of GIT (VGIT) theory to Lagrangian skeleton.

\subsection{Setup and Results \label{ss:setup}}
Let $\C^*$ act on $\C^N$ with weight vector $a = (a_1, \cdots, a_N)$, that is 
\[ \C^* \times \C^N \to \C^N, \quad (t, z_1, \cdots, z_N) \mapsto (t^{a_1} z_1, \cdots, t^{a_N} z_N). \]
Denote $$ [N]=\{1,\cdots, N\}, \quad [N]_\pm= \{i \in [N]: \pm a_i > 0\}, $$ 
and let
$ \eta_\pm = \sum_{i \in [N]_\pm} |a_i|$. 
We assume from the beginning that
\[ a_i \neq 0, \quad gcd(\{|a_i|\})=1, \quad [N]_\pm \neq \emptyset, \quad \eta: = \eta_+ - \eta_- \geq 0. \]

There are two possible GIT quotient stacks:
$$ \xcal_\pm = [ (\C^N)_\pm^{ss}  \,/\, \C^*], \quad \z{ where } (\C^N)_\pm^{ss}  := \C^N \RM  \{ z \in \C^N \mid z_i= 0 \z{ if } i \in [N]_\pm \} $$
They are open substacks of $\xcal =  [\C^N \,/\, \C^*]$. We denote the inclusions as 
 $\iota_\pm: \xcal_\pm \into \xcal.$

\begin{example}
Let $\C^*$ acts on $\C^3$ with weight $(a_i) = (1, 1, -1)$. Then $(\C^3)^{ss}_+ = \{ (z_1, z_2, z_3) \in \C^3 \mid (z_1, z_2) \neq (0,0) \}$ and $(\C^3)^{ss}_- =  \{ (z_1, z_2, z_3) \in \C^3 \mid z_3 \neq 0 \}$. The resulting GIT quotients stacks are isomorphic to the more familiar toric varieties,  $\xcal_+ \cong \z{Tot}_{\P^1}(\ocal (-1))$ and $\xcal_- \cong \C^2$. 
\end{example}

From Coherent-Constructible-Correspondence (CCC) for toric variety (or DM stack) \cite{bondal2006derived, FLTZ-morelli, Ku16}, we have equivalences of dg categories \eqref{eq:CCCpm}.
The dg category of coherent sheaves on the two quotients are related by a semi-orthogonal decomposition \cite{kawamata-toric-3,BFK, halpern2015derived}, 
\be \Coh(\xcal_+) = \la \underbrace{\Vect, \cdots, \Vect}_{\eta \z{ times }}, \Coh(\xcal_-) \ra. \ee
Hence, we have corresponding decomposition from CCC,
\be Sh^w(\T^{N-1}, \La_+) = \la \underbrace{\Vect, \cdots, \Vect}_{\eta \z{ times }}, Sh^w(\T^{N-1}, \La_-) \ra. \ee
The question is to understand the semi-orthogonal decomposition purely in terms of constructible sheaves.

One way to obtain the semi-orthogonal decomposition on the B-side is through window subcategory of $\Coh (\xcal)$. 
Let $W = \{n, n+1, \cdots, n+k\}$ be a set of consecutive integers. We call $W$ a {\em window}. 
We define the {\bf B-model window subcategory} for $W$ to be the full triangulated subcategories 
$$ \bcal_W = \la \{ \lcal_i, i \in W\} \ra \subset \Coh(\xcal),  $$
where  $\lcal_k$ is the $\C^*$-equivariant line bundle over $\C^N$ with weight $k$. 

\bt[\cite{segal2011equivalences, BFK, halpern2015derived}]
Let $W$ be a window. Let $\iota_\pm: \xcal_\pm \into \xcal$ be inclusions of open stacks. 
\begin{enumerate}
    \item If $|W| = \eta_+$, then we have equivalence of categories
    $\iota_+^*|_{\bcal_W}: \bcal_W \congto \Coh(\xcal_+). $   
    \item If $ |W| = \eta_-$, then we have equivalence of categories
     $ \iota_-^*|_{\bcal_W}: \bcal_W \congto \Coh(\xcal_-).$ 
\end{enumerate}
\et

Also, if $W_1 \In W_2$, then we have fully faithful embedding $\bcal_{W_1} \into \bcal_{W_2}$.

If we fix $a \in \Z$, and consider windows
$$ W_+ = \{a,a+1, \cdots, a+\eta_+ - 1\}, \quad W_- = \{ a+\eta, \cdots, a+\eta_+ - 1\} \In W_+$$
then we have windows of sizes $\eta_+$ and $\eta_-$ respectively, and $W_-$ is aligned with $W_+$ on the right side. The inclusion of window subcategories $\bcal_{W_-} \into \bcal_{W_+}$ induces an embedding 
$$ (\iota_+^*) \circ (\iota_{-}^*)^{-1}:  \Coh(\xcal_-) \into \Coh(\xcal_+). $$ 
The $\eta$ copies of $\Vect$ corresponds to  structure sheaf of the unstable loci $(\C^N)_-^{us}$ in $\Coh([\C^N \qo \C^*])$ twisted by some character of $\C^*$. The details are reviewed in section \ref{ss:b-model}. 

Next we consider the $A$-side. 
From CCC, we have 
\begin{align*}
    \tau:  \Coh(\xcal) & \congto Sh^w(\ycal, \La_\ycal), \\
    \tau_\pm: \Coh(\xcal_\pm) & \congto Sh^w(\ycal, \La_{\ycal,\pm})
\end{align*} 
where $\ycal \cong \T^{N-1} \times \R$ and $\La_{\ycal,\pm} \In \La_\ycal \subset T^*\ycal$ are a conical Lagrangian of dimension $N$. The Lagrangian skeletons  $\La_\pm \subset T^* \T^{N-1}$ in \eqref{eq:CCCpm} arises as restriction of $\La_{\ycal,\pm}$ over $\T^{N-1} \times \{t\}$ for $t \in \Z$. 

Thus, it is natural to define the {\bf  A-model  window subcategory} as
$$ \acal_W = \tau(\bcal_W) = \la \{ \tau (\lcal_i), i \in W\} \ra \subset  Sh^w(\ycal, \La_\ycal). $$
The question is, how to characterize $\acal_W$ geometrically. 

We define the {\bf window (sub)skeleton $\La_W$} as the minimal sub-skeleton of $\La_\ycal$ that contains the CCC image of $\bcal_W$:
\be \La_W := \bigcup_{i \in W} SS(\tau(\lcal_i)) \subset \La_\ycal. \ee

Thus, we have natural fully faithful embedding of subcategories
$$ \iota_W:  \acal_W \into Sh^w(\ycal, \La_W). $$ 
It is not always true that the embedding is an equivalence. However, we have
\bt \label{thm:gen}
If $|W| \geq \eta_-$, then the fully faithful embedding $\iota_W$ is an equivalence of categories. 
\et


Next, we consider the torus fibration 
$$ \mu: \ycal \to \R, \quad \ycal_t = \mu^{-1}(t) \cong \T^{N-1}. $$
Then we have a family of categories labelled by $t \in \R$
$$ \acal_{W}(t) = Sh^w(\ycal_t, \La_t), \quad \La_t: = \La_W|_{\ycal_t}.$$

The slices of $\La_W$ over $\ycal_t$ to the left and right of the window has nice relation with skeleton of GIT quotients. 
\bt[Theorem \ref{thm:slice}]
Let $W$ be a window. 
\begin{enumerate}
    \item If $|W| \geq \eta_-$, then over $\mu^{-1} (-\infty,  \min(-W))$, we have $\La_W = \La_{\ycal, -}$. 
    \item If $|W| \geq \eta_+$, then over $\mu^{-1} (\max(-W), +\infty)$, we have $\La_W = \La_{\ycal, +}$. 
\end{enumerate}
\et

\bt \label{thm:main}
Let $W$ be a window, and $|W| = \eta_+$. 

(a) If $\eta  = 0$, then $\acal_{W}(t)$ are constant (up to equivalence) for all $t \in \R$. 

(b) If $\eta > 0$, then there is a set $\jcal$ of size $\eta$, 
$$ \jcal = \{a, a+1, \cdots, a + \eta-1\}, \quad a = \min(-W),$$
such that $\acal_{W}(t)$ is locally constant (up to equivalence) away from $\jcal$. Moreover, 
\begin{enumerate}
    \item If $t > \max(\jcal)$, then $$\acal_{W}(t) \cong Sh^w(\T^{N-1}, \La_+).$$  
    \item If $t < \min(\jcal)$, then $$\acal_{W}(t) \cong Sh^w(\T^{N-1}, \La_-).$$
    \item If $t \in \jcal$, then for any small positive $\epsilon>0$, we have semi-orthogonal decomposition
    $$ \acal_{W}(t+\epsilon) = \la \Vect, \acal_{W}(t-\epsilon) \ra. $$
\end{enumerate}
\et

\subsection{Related works}

\sss{Variation of Lagrangian Skeleton}
Let $\La \In T^*M$ be a conical Lagrangian, and assume $T_M^*M \In \La$. Let $\La^\infty \In T^\infty M$ be the Legendrian in the contact infinity of $T^*M$. We speak of deformation of Legendrian $\La^\infty$ since $\La^\infty$ determines $\La$. 

The notion of non-characteristic variation of Lagrangian skeletons is introduced by Nadler \cite{nadler2015non}. In loc.cit, he proves any Legendrian singularity admits a NC deformation to a 'standard' (arboreal) singulairty, defined and studied in \cite{nadler2017arboreal}. 

One sufficient and necessary condition for NC variation of skeleton, is the that the complement of the Legendrian at infinity remains invariant upto contactomorphism \cite{GPS2, zhou2018sheaf}. Intuitively speaking, it is expected that, if the homotopy type of $\La^\infty$ is unchanged, and there is no birth or death of Reeb chords ending on $\La^\infty$ as it moves (for some fixed Reeb flow on $T^\infty M$), then the motion of $\La^\infty$ is non-characteristic. See also the recent work of Nadler-Shende on invariance of microlocal sheaf category under Weinstein isotopy \cite{nadler2020sheaf}. 

Deformation of Legendrians has been used to induce equivalences. Shende-Treumann-Williams-Zaslow have demonstrated beautifully how to understand cluster mutation from the point of view of Legendrian knot isotopy and the corresponding change of exact Lagrangian fillings \cite{STWZ-2019cluster-knot, STW2016combinatorics}. 

Given a Laurent polynomial with the coefficient in some tropical limit, one can construct a Lagrangian skeleton for the zero loci of the polynomial \cite{gammage2017mirror, zhou2020lagrangian}. If we rotate the arguments of the coefficients at the vertex of the Newton polytope, then the corresponding skeleton undergoes a NC deformation as well \cite{zhou2018sheaf, zhou2020lagrangian}. As well known, such rotation of the argument corresponds convolution on the constructible sheaf side\cite{FLTZ-morelli}, and also corresponds to tensoring by line bundle on the toric variety side (see also \cite{hanlon2019monodromy}). 

\sss{Homological Minimal Model Program} 
The homological mirror symmetry for toric varieties has been understood from different point of views, from Abouzaid's Lagrangian section \cite{abouzaid2009morse,abouzaid2006homogeneous}, through Fang-Liu-Treumann-Zaslow's treatment through constructible sheaves \cite{FLTZ-morelli, FLTZ-hms}, to  wrapped Fukaya categories  or wrapped microlocal sheaves \cite{Ku16}. 

There is an interesting refinement of homological mirror symmetry of toric variety, that not only matches the entire category of $Coh(X_\Sigma)$ to certain wrapped Fukaya category, but also matches certain semi-orthogonal decomposition of them. On the B-side (coherent side), it involves using a run  of minimal model program on $X_\Sigma$ to get a sequence of birational morphisms 
\[ X_\Sigma = X_0 \dashrightarrow X_1 \dashrightarrow \cdots \dashrightarrow X_n \]
as done by Kawamata \cite{kawamata-toric-3}, or Ballard-Favero-Katzarkov\cite{BFK}. On the A-side, suppose we are given a Landau-Ginzburg model $W: (\C^*)^n \to \C$, one may tropicalize $W$ (by making a choice), and decompose the critical values of $W$ into different sets, each lies approximately on a concentric circle \cite{DKK}.  Conjecturally, the choices of the minimal model run on the B-side matches the choices of the tropicalization of $W$ on the A-side, and  the semi-orthogonal decompositions of $\Coh(X_\Sigma)$ matches that with the $\Fuk( (\C^*)^n, W)$, and there has been progress towards it \cite{BFDKK, kerr2017homological}. 

Here we are proposing a third player in the picture, the constructible sheaf category. Given a minimal model run $X_0 \dashrightarrow \cdots \dashrightarrow X_n$, we can use coherent-constructible correspondence \cite{bondal2006derived, FLTZ-morelli, Ku16} to get a sequence of Lagrangian skeleta $\La_0, \cdots, \La_n$ in $T^*T^n$, however it is not clear (as far as the author know) how to build functor between $Sh(T^n, \La_{i})$ and $Sh(T^n, \La_{i+1})$ except in the special case that $\La_i \In \La_{i+1}$ or $X_i$ and $X_{i+1}$ is related by blow-up. The more general flip / flop between $X_i$ needs to be explained using variation of Lagrangian skeletons.  

Our final goal is to build functors between $Sh(T^n, \La_{i})$ and $Sh(T^n, \La_{i+1})$ using a continuous family of skeletons interpolating from $\La_i$ to $\La_{i+1}$. The general case will be studied in future work. 

In the upcoming paper with Jesse Huang \cite{HuangZhou2020}, we will consider the quasi-symmetric toric VGIT setup, a special case of toric CY, and one obtain an interesting local system of categories over a higher dimensional base.

\subsection{Sketch of the Proof}
Our main tool is the notion of variation of Lagrangian skeleton, which we now recall. 

Let $M$ be a real analytic compact manifold (we may relax the 'compact' condition in practice), $B$ a smooth manifold and $\pi: M \times B \to B$. 
A {\bf variation of Lagrangian skeleton} over $B$ is a Lagrangian $\La_B \In T^*(M \times B)$ containing the zero section, such that  \footnote{Here we use notation that $T^\infty M = (T^*M \RM  T_M^* M) / \R_+$ is the contact cosphere bundle at infinity, and $\La^\infty = (\La \RM T_M^*M) / \R_+$ is the (singular) Legendrian boundary of $\La$. }
$$ \La_B^\infty \cap T^\infty_{M \times \{b\}}(M \times B) = \emptyset, \quad \forall b \in B. 
$$
For $b \in B$, denote $M_b = \pi^{-1}(b)$ and $\La_b = \La_B|_{M_b}$. If $Sh^\dm(M_b, \La_b)$ is invariant as we vary $b$, then this is called 'non-characteristic' deformation by Nadler \cite{nadler2015non}. In general, we are interested in how $Sh^\dm(M_b, \La_b)$ varies. 

We study this problem locally in $M \times B$. We consider the sheaf of categories $Sh^\dm_{\La_B}$ on $M \times B$, and define its singular support $SS(Sh^\dm_{\La_B})$. We also define the push-forward of $\pi_*Sh^\dm_{\La_B}$, then
$$ SS(\pi_*Sh^\dm_{\La_B}) \In \pi_* (SS(Sh^\dm_{\La_B})) $$
where push-forward Lagrangian skeleton is by Lagrangian correspondence. 

In general, given a skeleton $\La \In T^*Z$, we are interested in how to compute $SS(Sh^\dm_\La)$ from $\La$. Let $V \In U \In Z$ be nested open sets, he restriction functor 
$$ \rho_{U, V}: Sh^\dm_\La(U) \to Sh^\dm_\La(V) $$
is an equivalence if and only if $\rho_{U,V}$ is fully-faithful and its left-adjoint $\rho_{U,V}^L$, i.e., the co-restriction functor is fully-faithful. Hence we defined two other singular supports $SS_{Hom}(\La)$ and $SS_{Hom}^L(\La)$ that measures the failure of fully-faithfulness. We have 
$$ SS(Sh^\dm_\La) = SS_{Hom}^L(\La) \cup SS_{Hom}(\La).$$

In our case, $M \times B = \T^{n-1} \times \R$, the variation of skeleton is $\La_W \In T^*(\T^{N-1} \times \R)$. Since the computation is local, we work in the universal cover $\wt \La_W \In T^*\R^N$. The computation of $SS_{Hom}(\La_W)$ boils down to an estimation of the singular support of the hom-sheaf. 
A useful trick in the estimation is the following theorem. 
\bt [Theorem \ref{thm:SSuhom} later]
Let $F, G$ be constructible sheaves over $M$ and $\{G_t\}_{t \in (0,\epsilon)}$ a positive isotopy of $G$. Then 
$$ SS(\uhom(F, G)) \subset \limsup_{t \to 0^+} SS(\uhom(F, G_t)). $$
In particular, if $SS^\infty(F) \cap SS^\infty(G_t) = \emptyset$ for $t \in (0, \epsilon)$ then 
$$ SS(\uhom(F, G)) \subset \limsup_{t \to 0^+} SS(F)^a + SS(G_t). $$
\et

The computation of $SS_{Hom}^L(\La_W)$, in our case, is done by a straight-forward, if tedious, study on the skeleton and the generators of the sheaf category locally, i.e, the microlocal skyscrapers sheaves. The main result is that, if the window size is large enough, more precisely $|W| \geq \eta_+$, then $SS_{Hom}^L(\La)$ is the zero-section. 

\subsection{Acknowledgement}
I would like to thank Gabe Kerr, Mathew Ballard for discussion about the VGIT problem; Yixuan Li, David Nadler and Vivek Shende for helpful discussions on the wrapped sheaf categories;  Jesse Huang for teaching me about perverse schober. The initial motivation to study variation of skeleton comes from discussion with Eric Zaslow on the work \cite{RSTZ}.  The construction of window skeleton is inspired by Colin Diemer and Maxim Kontsevich. This work is partially supported by IHES Simons Postdoctoral Fellowship as part of the Simons Collaboration on HMS.

\subsection{Notation\label{ss:notation}}

We summarize all the notations used in the paper for the ease of reference. 

1. $[N], [N]_\pm$ are defined in section \ref{ss:setup}. If $I \In [N]$, we let $I_\pm = I \cap [N]_\pm$. 

2. $\T = \R / \Z$. If $\NN$ is a lattice, we let $\NN_A := \NN \otimes_\Z A$ for abelian groups $A = \R, \C, \C^*, \T$. 

3. Let $\Z^N$ denote the character lattice of $(\C^*)^N$, with basis $e_i, i \in [N]$. Let $(\Z^N)^\vee$ denote  dual lattice,  with dual basis $e_i^\vee, i \in [N]$. 

4. For $I \In [N]$, we define closed cones
\[ \kappa_I = \cone(e_i, i \in I) \subset \R^N, \quad \sigma_I = \cone(e_i^\vee, i \in I) \subset (\R^N)^\vee. \]
We also define standard vectors 
\[ e_I = \sum_{i \in I} e_I, \quad e_I^\vee = \sum_{i \in I} e_I^\vee. \]

5. There are a dual pair of short exact sequences (SES) 
\[0 \to \Z \xto{\cdot a} (\Z^N)^\vee \to \NN \to 0  \] 
\[ 0 \to \MM \to \Z^N \xto{\mu_\Z} \Z) \to 0.\]
where $\mu_\Z(x) = \sum_i x_i a_i$. 
We tensor the second SES by $\R$ and quotient by $\MM$, to get
$$ 0 \to \frac{\MM_\R}{\MM} \to \frac{\R^N}{\MM} \xto{\mu}  \R \to 0. $$
We identify ${\MM_\R}\qo {\MM}$ with $\MM_\T$, then we get a torus fibration
\[ \mu: \ycal \to \R, \quad \ycal := \frac{\R^N}{\MM}, \quad \ycal_t := \mu^{-1}(t). \]

6. If $Z \subset \R^N$ is a locally closed subset, then let $\C_Z$ denote the constant sheaf with non-zero stalk $\C$ only on $Z$.

7. If $v \in \R^N$, we denote 
$$ Q_v = \C_{v + (\R_{>0})^N}$$  the constant sheaf supported on the shifted open quadrant. 

8. If $v \in \Z^N$, $I \subset [N]$, we denote
\[ \pcal_I(v) : = \uwave{Q_v} \to \bigoplus_{J \subset I, |J|=1} Q_{v - e_J} \to \bigoplus_{J \subset I, |J|=2} Q_{v - e_J} \to \cdots \to Q_{v - e_I}, \]
where $Q_v$ is sitting at degree $0$ \footnote{We always use wavy underline to indicate the degree $0$ component.} and the maps are induced by natural inclusion of open sets.


9. We say $\C^*$ acts on $\C$ {\bf with weight $k$} if $\C^* \times \C \to \C$ is given by $(t,z) \mapsto t^k z$.
I
Let $\lcal$ be a $\C^*$-equivariant line bundle on $\C^N$. We say $\lcal$ is of weight $k$, if $\C^*$ acts on the fiber over fixed point $\lcal|_0$ with weight $k$. We denote the weight-$k$ line bundle by $\lcal_k$.  

10. If $\C^*$ acts on $\C$ with weight $1$, then $\C^*$ acts on its dual $\C^\vee$ with weight $-1$. Thus the space of function on $\C$ has $\C^*$-action with non-positive weights
$$ \ocal_\C = Sym^*(\C^\vee) = Sym^*(\C \cdot z) = \C[z], \quad \z{weight}(z)=-1. $$
If $\lcal_k$ is the weight $k$ line bundle on $\C$, then 
$$ \Hom(\lcal_0, \lcal_k) = \Gamma(\lcal_k) \cong \C[z] \cdot e_k, \quad \z{weight}(z)=-1, \z{weight}(e_k) = k. $$ 
We define the $\C^*$-equivariant map to be the weight-$0$ subspace above
$$ \Hom_\CS(\lcal_0, \lcal_k) = \Gamma(\lcal_k)_0 = \bcs \C \cdot z^k & k \geq 0 \\
0 & k < 0 \ecs. $$

11. If $Z$ is a real analytic manifold. We let $\pi_Z: T^*Z \to Z$ be its cotangent bundle, $\dot T^*Z = T^*Z \RM T^*_Z Z$ the punctured bundle, $T^\infty Z = T^*Z \qo \R_+$. If $\La \In T^*Z$ is a conical Lagrangian, we let $\La^\infty = (\La \cap \dot T^* Z) \qo \R_+$ be its Legendrian boundary. If $S \In Z$ is a submanifold, we let $T^\infty_S Z = (T^*_S Z)^\infty$.

If $S \In Z$ is an embedded submanifold, $\La \In T^*Z$ a conical Lagrangian, we define the {\bf restriction of $\La$ to $S$ as} 
$$ \La|_S  = q (\La \cap T^*Z|_S) \In T^*S $$
where $q$ is defined as 
$$ 0 \to T_{S}^* Z \to TZ|_{S} \xto{q} T^* S.  $$

Let $f: Z \to Y$ be a smooth submersion map, $Z_y = f^{-1}(y)$ for $y \in Y$,  $\La \In T^*Z$ a conical Lagrangian. We say $\La$ is {\bf non-characteristic with respect to $f$}, if 
$$ \La^\infty \cap T_{Z_y}^\infty = \emptyset. $$
We denote $\La_y = \La|_{Z_y} \In T^* Z_y$. 

\section{Examples} 

We present three examples of variation of Lagrangians. The first two are examples for non-characteristic deformation, i.e. $\eta=0$, for resolution of $\C^2/2$ and the Atiyah flop; the last one is $\C^*$ acting on $\C^2$ with weight $(3,-1)$, so that we can demonstrate the shape of the window skeleton and the behavior of the 'thimble'. The first two examples are also considered in the work of Donovan-Kuwagaki \cite{donovan-kuwagaki-2019mirror}, there the goal is to construct a perverse schober as 'a bridge' between the $\La_\pm$. 

\begin{example}
Consider $\C^*$ acts on $\C^3$ with weight $(1,1,-2)$. The two GIT quotient stacks are $\xcal_- = [\C^2/\Z_2]$ and $\xcal_+ = \ocal_{\P^1}(-2)$. 
The corresponding toric fans are shown below. 
\[ 
\tikz{
 \draw[opacity=0.1 ,ultra thin] (-2,-1) grid (2,2);
 \draw [->] (0,0) -- (1, 1); 
 \draw [->] (0,0) -- (-1, 1); 
 \node at (0, -0.5) {$\Sigma_-$};
}
\quad
\tikz{
 \draw[opacity=0.1 ,ultra thin] (-2,-1) grid (2,2);
 \draw [->] (0,0) -- (1, 1); 
 \draw [->] (0,0) -- (0, 1); 
 \draw [->] (0,0) -- (-1, 1); 
 \node at (0, -0.5) {$\Sigma_+$};
}
\]
From this, we can build the skeletons $\La_-$ and $\La_+$ on $\T^2$. The following picture shows an open neighborhood of a fundamental domain of the unwrapping of $\T^2$. The hairs lines are the projection image of $\La^\infty \In T^\infty \T^2$, under the 'front projection' map $\pi: T^\infty \T^2 \to \T^2$. The small arcs indicate that the pre-image of $\pi|_{\La^\infty}$ over the corresponding intersection point is a small arc. 
\[ 
\tikz{
\begin{scope}
 \draw[opacity=0.1,ultra thin] (0,0) grid (2,2);
 \draw [righthairs] (-0.3, -0.3) -- (2.3, 2.3); 
 \draw [righthairs] (-0.3,2.3) -- (2.3, -0.3);
\foreach\i/\j in {0/0,  2/2}{
\draw [righthairs] (\i -0.3,\j+0.3) -- +(0.6, -0.6);
\draw [fill, opacity=0.5] (\i, \j)-- +(-0.2, -0.2) arc (-135:-45:0.28);
}
\foreach\i/\j in {2/0, 0/2}{
\draw [righthairs] (\i -0.3,\j-0.3) -- +(0.6, 0.6);
\draw [fill, opacity=0.5] (\i, \j)-- +(-0.2, -0.2) arc (-135:-45:0.28);
}
\draw [fill, opacity=0.5] (1,1) -- (0.8, 0.8) arc (-135:-45:0.28);
 \node at (1, -0.5) {$\La_-$};
\end{scope};

\begin{scope}[shift={(5,0)}]
 \draw[opacity=0.1,ultra thin] (0,0) grid (2,2);
 \draw [righthairs] (-0.3, -0.3) -- (2.3, 2.3); 
 \draw [righthairs] (-0.3,2.3) -- (2.3, -0.3);
\foreach\i/\j in {0/0,  2/2}{
\draw [righthairs] (\i -0.3,\j+0.3) -- +(0.6, -0.6);
\draw [fill, opacity=0.5] (\i, \j)-- +(-0.2, -0.2) arc (-135:-45:0.28);
}
\foreach\i/\j in {2/0, 0/2}{
\draw [righthairs] (\i -0.3,\j-0.3) -- +(0.6, 0.6);
\draw [fill, opacity=0.5] (\i, \j)-- +(-0.2, -0.2) arc (-135:-45:0.28);
}
\draw [righthairs] (0.5,1.5) -- (1.5,1.5); 
\draw [fill, opacity=0.5] (0.5, 1.5)-- +(-0.2, -0.2) arc (-135:-90:0.28);
\draw [fill, opacity=0.5] (1.5, 1.5)-- +(0.2, -0.2) arc (-45:-90:0.28);
\node at (1, -0.5) {$\La_t$};
\end{scope}

\begin{scope}[shift={(10,0)}]
 \draw[opacity=0.1,ultra thin] (0,0) grid (2,2);
 \draw [righthairs] (-0.3, -0.3) -- (2.3, 2.3); 
 \draw [righthairs] (-0.3,2.3) -- (2.3, -0.3);
\foreach\i/\j in {0/0,  2/2}{
\draw [righthairs] (\i -0.3,\j+0.3) -- +(0.6, -0.6);
\draw [fill, opacity=0.5] (\i, \j)-- +(-0.2, -0.2) arc (-135:-45:0.28);
}
\foreach\i/\j in {2/0, 0/2}{
\draw [righthairs] (\i -0.3,\j-0.3) -- +(0.6, 0.6);
\draw [fill, opacity=0.5] (\i, \j)-- +(-0.2, -0.2) arc (-135:-45:0.28);
}
\draw [righthairs] (-0.5,0) -- (2.5,0); 
\draw [righthairs] (-0.5,2) -- (2.5,2); 
 \node at (1, -0.5) {$\La_+$};
\end{scope}
}
\]
An interpolation of skeletons, generated by the slices of window skeleton is of the form. 
\end{example}

\begin{example}
Atiyah Flop. Consider $\C^*$ acts on $\C^4$ with weight $(1,1,-1,-1)$. The toric fan $\Sigma_-$ and $\Sigma_+$ both have rays generated by 
$$ \{ v_1 = (0,0,1), v_2 = (0,1,0), v_3 = (0,0,1), v_4 = (1,1,-1)\}.$$
$\Sigma_-$ and $\Sigma_+$ differ at the triangulation of the (non-simplicial) $3$-dimensional cone generated by the four rays. In particular $\Sigma_-$ contains  $\cone(v_2,v_3)$, $\Sigma_+$ contains $\cone(v_1,v_4)$. 
\begin{center}
   \begin{tikzpicture}
   \begin{scope}[x  = {(0.7cm, 0cm)},
                    y  = {( 0.8cm, 0.5cm)},
                    z  = {(0cm, 1cm)},scale=2 ]
    \draw [->] (0,0,0) -- (0,0,1); 
    \draw [->] (0,0,0) -- (0,1,0); 
    \draw [->] (0,0,0) -- (1,0,0); 
    \draw [->] (0,0,0) -- (1,1,-1); 
    \draw [draw=red, thick,   fill=yellow, opacity = 0.4] (0,0,1) -- (0,1,0) -- (1,1,-1) -- (1,0,0) -- cycle; 
     \draw [draw=red] (0,1,0) --  (1,0,0); 
    \end{scope}
       \begin{scope}[shift={(6,0)},x  = {(0.7cm, 0cm)},
                    y  = {( 0.8cm, 0.5cm)},
                    z  = {(0cm, 1cm)} ,scale=2]
    \draw [->] (0,0,0) -- (0,0,1); 
    \draw [->] (0,0,0) -- (0,1,0); 
    \draw [->] (0,0,0) -- (1,0,0); 
    \draw [->] (0,0,0) -- (1,1,-1); 
    \draw [draw=red, thick, fill=yellow, opacity = 0.4] (0,0,1) -- (0,1,0) -- (1,1,-1) -- (1,0,0) -- cycle; 
     \draw [draw=red] (0,0,1) --  (1,1,-1); 
    \end{scope}
    \node at (-1, 0) {$\Sigma_- = $};
    \node at (5, 0) {$\Sigma_+ = $};
    \end{tikzpicture}
\end{center}

The corresponding Lagrangians skeletons $\La_\pm$ and the interpolation is shown as following. The red thick line corresponds the image where $\pi|_{\La^\infty}$ is not finite. 

\newcommand{\Depth}{2}
\newcommand{\Height}{2}
\newcommand{\Width}{2}
\newcommand{\Dis}{0.8}
\begin{center}
\begin{tikzpicture}
\tikzset{linestyle/.style={red,very thick}}

\begin{scope} [x  = {(1cm, 0cm)},
                    y  = {( 0.6cm, 0.3cm)},
                    z  = {(0cm, 1cm)}
                    ]
\coordinate (O) at (0,0,0);
\coordinate (A) at (0,\Width,0);
\coordinate (B) at (0,\Width,\Height);
\coordinate (C) at (0,0,\Height);
\coordinate (D) at (\Depth,0,0);
\coordinate (E) at (\Depth,\Width,0);
\coordinate (F) at (\Depth,\Width,\Height);
\coordinate (G) at (\Depth,0,\Height);
\fill[ fill=yellow!80] (O) -- (C) -- (G) -- (D) -- cycle;
\fill[ fill=blue!30] (O) -- (A) -- (E) -- (D) -- cycle;
\fill[ fill=red!10] (O) -- (A) -- (B) -- (C) -- cycle;
\fill[ fill=red!20,opacity=0.8] (D) -- (E) -- (F) -- (G) -- cycle;
\fill[ fill=red!20,opacity=0.6] (C) -- (B) -- (F) -- (G) -- cycle;
\fill[ fill=red!20,opacity=0.8] (A) -- (B) -- (F) -- (E) -- cycle;
\fill[ fill=green, opacity=0.4]  (B) -- (O) -- (G) ; 
\fill[ fill=green, opacity=0.4 ](A) -- (F) -- (D);

\draw [linestyle] (O)--(C) (D)--(G) (E) -- (F) (A)--(B); 
\draw [linestyle] (O)--(A) (D)--(E)  (G) -- (F) (C)--(B); 
\draw [linestyle] (O)--(D) (A)--(E)  (B) -- (F) (C)--(G); 
\draw [linestyle] (O)--(B) (A)--(F)  (D) -- (F) (O)--(G); 
\node at (1,0, -0.5) {$\La_-$};
\end{scope}

\begin{scope}[shift={(5,0)}, x  = {(1cm, 0cm)},
                    y  = {( 0.6cm, 0.3cm)},
                    z  = {(0cm, 1cm)}
                    ]
\coordinate (O) at (0,0,0);
\coordinate (A) at (0,\Width,0);
\coordinate (B) at (0,\Width,\Height);
\coordinate (C) at (0,0,\Height);
\coordinate (D) at (\Depth,0,0);
\coordinate (E) at (\Depth,\Width,0);
\coordinate (F) at (\Depth,\Width,\Height);
\coordinate (G) at (\Depth,0,\Height);

\fill[ fill=yellow!80] (O) -- (C) -- (G) -- (D) -- cycle;
\fill[ fill=blue!30] (O) -- (A) -- (E) -- (D) -- cycle;
\fill[ fill=red!10] (O) -- (A) -- (B) -- (C) -- cycle;
\fill[ fill=red!20,opacity=0.8] (D) -- (E) -- (F) -- (G) -- cycle;
\fill[ fill=red!20,opacity=0.6] (C) -- (B) -- (F) -- (G) -- cycle;
\fill[ fill=red!20,opacity=0.8] (A) -- (B) -- (F) -- (E) -- cycle;
\fill[ fill=green, opacity=0.4 ]   (0,0,2-\Dis) -- ++(\Dis, 0, \Dis) -- ++(-\Dis, \Dis, 0) -- cycle;
\fill[ fill=green, opacity=0.4]  ($(B) + (\Dis, 0,0)$) -- ($(B) + (0,0,-\Dis)$)  -- ($(O) + (0, \Dis, 0) $) --   ($(O) + (\Dis, 0, 0) $) --  ($(G) + (0,0,-\Dis)$) -- ($(G) + (0, \Dis, 0) $) -- cycle; 
\fill[ fill=green, opacity=0.4 ] ($(A)+ (\Dis, 0,0)$) -- ($(F) + (0,0,-\Dis)$)  -- ($(D)+ (0, \Dis, 0) $) ; 

\draw [linestyle]  (0,0,2-\Dis) -- ++(\Dis, 0, \Dis) -- ++(-\Dis, \Dis, 0) -- cycle;
\draw [linestyle]   ($(B) + (\Dis, 0,0)$) -- ($(B) + (0,0,-\Dis)$)  -- ($(O) + (0, \Dis, 0) $) --   ($(O) + (\Dis, 0, 0) $) --  ($(G) + (0,0,-\Dis)$) -- ($(G) + (0, \Dis, 0) $); 
\draw [linestyle]  ($(A)+ (\Dis, 0,0)$) -- ($(F) + (0,0,-\Dis)$)  -- ($(D)+ (0, \Dis, 0) $) ;

\draw [linestyle] (O)--($(C) + (0,0,-\Dis)$) (D)-- ($(G) + (0,0,-\Dis)$) (E) -- ($(F) + (0,0,-\Dis)$) (A)--($(B) + (0,0,-\Dis)$); 
\draw [linestyle] (O)--(A) (D)--(E)  (G) -- (F) (C)--(B); 
\draw [linestyle] (O)--(D) (A)--(E)  (B) -- (F) (C)--(G); 
\node at (1,0, -0.5) {interpolating $\La_t$};
\end{scope}

\begin{scope}[shift={(10,0)}, x  = {(1cm, 0cm)},
                    y  = {( 0.6cm, 0.3cm)},
                    z  = {(0cm, 1cm)}
                    ]
\coordinate (O) at (0,0,0);
\coordinate (A) at (0,\Width,0);
\coordinate (B) at (0,\Width,\Height);
\coordinate (C) at (0,0,\Height);
\coordinate (D) at (\Depth,0,0);
\coordinate (E) at (\Depth,\Width,0);
\coordinate (F) at (\Depth,\Width,\Height);
\coordinate (G) at (\Depth,0,\Height);
\fill[ fill=yellow!80] (O) -- (C) -- (G) -- (D) -- cycle;
\fill[ fill=blue!30] (O) -- (A) -- (E) -- (D) -- cycle;
\fill[ fill=red!10] (O) -- (A) -- (B) -- (C) -- cycle;
\fill[ fill=red!20,opacity=0.8] (D) -- (E) -- (F) -- (G) -- cycle;
\fill[ fill=red!20,opacity=0.6] (C) -- (B) -- (F) -- (G) -- cycle;
\fill[ fill=red!20,opacity=0.8] (A) -- (B) -- (F) -- (E) -- cycle;
\fill[ fill=green, opacity=0.4]  (B) -- (O) -- (G) ; 
\fill[ fill=green, opacity=0.4 ](A) -- (F) -- (D);

\draw [linestyle] (B)--(G) (A)--(D); 
\draw [linestyle] (O)--(A) (D)--(E)  (G) -- (F) (C)--(B); 
\draw [linestyle] (O)--(D) (A)--(E)  (B) -- (F) (C)--(G); 
\draw [linestyle] (O)--(B) (A)--(F)  (D) -- (F) (O)--(G); 
\node at (1,0, -0.5) { $\La_+$};
\end{scope}
\end{tikzpicture}
\end{center}
\end{example}

\begin{example}
Consider $\C^*$ acting on $\C^2$ with weight $(3,-1)$. The window skeleton is shown as below, living over $S^1 \times \R$ (drawn as $\R \times [0,1]$ with top and bottom edge identified). 

\begin{center}
\begin{tikzpicture}
\def\a{-1.264-0.3}
\def\b{1.897-0.3}

\begin{scope}
\clip (-5, \a) rectangle (5,\b);
\draw [dashed] (-5,\a) -- (5,\a); 
\draw [dashed] (-5,\b) -- (5,\b); 
\begin{scope} [rotate=18.5]
\foreach \c in {(-1,-1), (0,1),(0,0)} {
\foreach \v in {0,1,2,3}{  \begin{scope}[shift={(-\v, -\v*3)}] \draw [lefthairs,shift={\c}] (10,0) -- (0,0) -- (0, 20); \end{scope}}
\draw [fill, shift={\c}] (0,0) circle (2pt); 
}
\node[draw] at (-0.5, 1) {$a$};
\node[draw] at (0,-1) {$b$}; \node at (1,-1.5) {$F_1$};
\node[draw] at (1,1) {$c$};\node at (1.5,0.5) {$F_2$};

\fill [yellow, opacity=0.2] (-1,-2) rectangle (10,-1);
\fill [blue, opacity=0.2] (-1,-3) rectangle (10,-2);
\fill [yellow, opacity=0.2, shift={(1,3)}] (-1,-2) rectangle (10,-1);
\fill [blue, opacity=0.2, shift={(1,3)}] (-1,-3) rectangle (10,-2);

\end{scope}
\end{scope}

\def\cc{0.316}
\draw [red, very thick, dashed] (-\cc, \a) -- (-\cc, \b); 
\draw [red, very thick, dashed] (-2*\cc, \a) -- (-2*\cc, \b); 
\draw [blue, very thick, dashed] (-2, \a) -- (-2, \b); 
\draw [blue, very thick, dashed] (2, \a) -- (2, \b); 
\fill [blue, opacity=0.1] (-2*\cc,\a) rectangle (0, \b);
\node [below] at (-2,\a) {$\La_-$};
\node [below] at (2,\a) {$\La_+$};

\end{tikzpicture}\end{center}

The window skeleton is the union of three skeleton $\La(0), \La(1), \La(2)$, whose vertices are marked in black nodes. The window region is marked in shadow. Take a vertical slice on the right of the window region, we get the skeleton $\La_+$ for $[\C / \Z_3]$; and the vertical slice on the left of the window region gives skeleton $\La_-$ for $\C$.
\[ \La_- =  \tikz[anchor=base, baseline]{
\draw (-0.5,-0.22) -- +(0,-1);
\draw (0,0) ellipse (1 and 0.3);
}\quad \quad 
\La_+ =  \tikz[anchor=base, baseline]{
\draw (-1,0) -- +(0,-1);
\draw (0.7,0.2) -- +(0,-1);
\draw (0.6,-0.22) -- +(0,-1);
\draw (0,0) ellipse (1 and 0.3);
} 
\]

As the slice pass through the window region, from  $t>2$ to $t<0$, the skeleton $\La_t$ under goes two jumps at $t=0$ and $t=1$, which are marked in red line. The change of skeleton is easy to understand, the three half rays in $\La_+$ gets merged to two rays, then to one ray. 

Next, we show how to identify the semi-orthogonal decomposition on the B-side
\[ \Coh([\C / \Z_3]) = \la \Vect_2, \Vect_1, \Coh(\C) \ra. \]
First, we claim that 
\[ Sh^w(\T \times \R, \La) \cong Sh^w(\T, \La_+) \cong \Coh([\C / \Z_3]) .\]
$\Coh(\C)$ corresponds to the full triangulated subcategory of $Sh^w(\T \times \R, \La)$ generated by sheaf $F_0$ co-representing the stalk at point $a$; $\Vect_1$ generated by the microlocal skyscraper sheaf
\footnote{To be precise, we should say 'a' microlocal skyscraper sheaf, since there are choices of grading. In practice, we fix an initial microstalk sheaf
\tikz[anchor=base, baseline, scale=0.7]{ 
\draw [lefthairs] (-1,0) -- (1,0); \draw [lefthairs] (-0.5, -0.3) -- (0.5, -0.3); \draw [fill, opacity = 0.1] (-0.5, -0.3) .. controls (0, 0.5) .. (0.5, -0.3);   \draw [lefthairs] (-0.5, -0.3) .. controls (0, 0.5) .. (0.5, -0.3);}, then apply geodesic flow stopped by the desired skeleton. 
} $F_1$ at point $b$ (support marked in yellow); $\Vect_2$ the microlocal skyscraper sheaf $F_2$ at point $c$ (support marked in blue). We can easily verify that $\Hom(F_i, F_j) = 0,  \text{ if } i < j$, and also the generation condition. Hence $F_2, F_1, F_0$ forms an exceptional collection and induces the semi-orthogonal condition. \end{example}

\section{Background}

\subsection{Wrapped Microlocal Sheaves}
We review the basic definitions and properties of wrapped microlocal sheaves, as introduced by Nadler in \cite{nadler2016wrapped} to mimic the definition of wrapped Fukaya categories. There is nothing original in this subsection, we only give an abridged account of loc.cit, section 1.1. 

We fix $k=\C$, or any other characteristic zero field. 

\sss{Motivation} First we introduce the notion of Lagrangian skeleton of a Weinstein manifold or Weinstein pair. Since these heuristics are only for motivations, we will not give the precise definitions but refer the reader to the survey \cite{eliashberg2017weinstein} and references in there. 

Let $(W, \omega = d\lambda)$ be an exact symplectic manifold, with Liouville one-form $\lambda$ and (retracting) Liouville vector field $X_\lambda$ defined by $\iota_{X_\lambda} \omega = - \lambda$. Roughly speaking, if the Liouville vector field 'points inward' near infinity and is gradient-like, then $W$ is a Weinstein manifold. The skeleton $\Skel(W)$ (or core) of a Weinstein manifold $W$ is the union of the unstable manifolds for $X_\lambda$. Hence  $\Skel(W)$ is a deformation retract of $W$ that is also compact and isotropic. We will only consider the case where the skeletons are Lagrangian. 

A variant of Weinstein manifold is {\bf Weinstein pair} $(W, H)$, where $H$ can be thought of as a Weinstein hypersurface in the contact boundary $\pa^\infty W$ of $W$ at infinity. Then $\Skel(W, H)$ is the union of $\Skel(W)$ and the flow-out of $\Skel(H)$ under the retracting flow $X_\lambda$. We assume $\Skel(W, H)$ is again a Lagrangian in $W$. 

One may define the {\bf (partially) wrapped Fukaya category} $\wcal(W, H)$ associated to the Weinstein pair $(W, H)$, where the objects are Lagrangian submanifolds, and morphisms $\Hom(L_1, L_2)$ are computed by wrapping $L_1$ positively near infinity.

Kontsevich outlined an approach to compute the category $\wcal(W, H)$ as global section of a cosheaf of categories on the skeleton $L = \Skel(W, H)$. This idea is realized first in the sheaf-theoretic
approach in \cite{nadler2016wrapped, shende2017microlocal} as cosheaf of dg categories $\mu sh_L^w$ of {\bf wrapped microlocal sheaf}, then as wrapped Fukaya category of Liouville sectors in \cite{GPS1, GPS2}.

\sss{Three categories of microlocal sheaves}
Let $\La \subset T^*Z$ be a closed conical Lagrangian in a cotangent bundle of a real analytic manifold $Z$. To a conical open subset $\Omega \In T^*Z$, we associate the following three dg categories. 
\begin{enumerate}
    \item $\mu Sh^\dm_\La(\Omega)$: the category of {\bf large} microlocal sheaves. It is a cocomplete dg category consists of microlocal sheaves with microstalks being arbitrary $k$-modules supported on $\La$. 
    \item $\mu Sh^w_\La(\Omega)$: the category of {\bf wrapped} microlocal sheaves . By definition, this is the category of compact objects in $\mu Sh^\dm_\La(\Omega)$. 
    $$ \mu Sh^w_\La(\Omega) = (\mu Sh^\dm_\La(\Omega))_c.$$
    \item $\mu Sh_\La(\Omega)$: the category of {\bf traditional} microlocal sheaves . This is a full dg subcategory of $\mu Sh^\dm_\La(\Omega)$, consisting of objects whose microstalks are perfect $k$-modules (i.e. finite dimensional $k$-vector spaces) supported along $\La$. 
\end{enumerate}

The wrapped microlocal sheaves $\mu Sh^w_\La(\Omega)$ can recover the other two.  The large microlocal sheaves is its ind-completion: $$\mu Sh^\dm_\La(\Omega) = Ind (\mu Sh^w_\La(\Omega)). $$
The traditional microlocal sheaves is the category of exact functors on it valued in perfect modules: 
$$ \mu Sh_\La(\Omega) = Fun^{ex}(\mu Sh^w_\La(\Omega)^{op}, \Perf_k). $$

There is a distinguished set of wrapped microlocal sheaves, called {\it microlocal skyscrapers}, that co-represents the microstalk functors associated to smooth strata of $\La \cap \Omega$. The collection of microlocal skyscrapers forms a compact generators of $\mu Sh^\dm_\La(\Omega)$, which split-generate $\mu Sh^w_\La(\Omega)$ \cite{nadler2016wrapped}[Lemma 1.4].

If $\Omega_1 \subset \Omega_2$ are two conic open subsets in $T^*Z$, we have the restriction functor
$$ \rho: \mu Sh^\dm_\La(\Omega_2) \to \mu Sh^\dm_\La(\Omega_1). $$
$\rho$ has a left-adjoint $\rho^L$ that preserves compact objects, we have then  $$\rho^L: \mu Sh^\dm_\La(\Omega_1) \to \mu Sh^\dm_\La(\Omega_2), \quad \rho^w: \mu Sh^w_\La(\Omega_1) \to \mu Sh^w_\La(\Omega_2).  $$ 
 
If $\La_1 \subset \La_2$ are two closed conic Lagrangians, then we have a fully-faithful embedding
$$ i: \mu Sh^\dm_{\La_1}(\Omega) \to \mu Sh^\dm_{\La_2}(\Omega). $$
$i$ also has a left-adjoint that preserves compact objects, we have
$$  i^L: \mu Sh^L_{\La_2}(\Omega) \to \mu Sh^L_{\La_1}(\Omega),  \quad  i^w: \mu Sh^w_{\La_2}(\Omega) \to \mu Sh^w_{\La_1}(\Omega). $$

The realization of microlocal sheaves are described in \cite{nadler2016wrapped}[Section 3]. Here we only use corresponding three versions for constructible sheaves, $Sh^\dm_\La$, $Sh^w_\La$ and $Sh_\La$. Given an open set $U \In Z$, we have $Sh^\dm_\La(U) = \mu Sh^\dm_\La(\pi^{-1}(U))$, where $\pi: T^*Z \to Z$ is the projection. All the statement about functors and left-adjoints applies here. 

\sss{Stalk Probe Sheaves}
If $U \In Z$ is an open set, and $p \in U$, then the stalk functor $\phi_p: Sh^\dm_\La(U) \to Mod_k$ is co-representable by an object $\fcal_p \in Sh^w_\La(U)$, i.e.
$ \Hom(\fcal_p, G) \cong G_p$ for all $ \gcal \in Sh^\dm_\La(U).$. We call $\fcal_p$ the {\bf  stalk probe sheaf} for point $p$ in $Sh^\dm_\La(U)$. This is an alias for Nadler's microlocal skyscraper sheaf in the special case that the microstalk functor is a stalk functor. 

In general, it is hard to explicitly describe the stalk probe sheaves. In this paper, we employ two methods to find the descriptions: 
\begin{enumerate}
    \item Geometric wrapping. We start from a sheaf $P_0 = \C_B$ supported on a small open ball around $p$, then we consider {\bf positive isotopy} $P_t$ of $P_0$, such that 
    $$ SS^\infty (P_t) \cap \La^\infty = \emptyset, \z{ and }, \; SS^\infty (P_1) \In \La^\infty. $$ 
    Or more generally, an arbitrary isotopy $P_t$ such that there exists a positive isotopy of $\La$, satisfying $SS^\infty (P_1) \In \La^\infty$ and the {\bf left non-characteristic (LNC)} condition
    $$ SS^\infty (P_t) \cap \La_s^\infty = \emptyset, \forall t \in [0,1], s \in (0,\epsilon). $$  
    \item Algebraic Constraint. If the stalk functor that we are interested in can be resolved using other stalks functors with known probe sheaves, then the probe sheaf that we are interested enjoys the same resolution. This is useful if we have know a closed embedding of skeleton $\La_1 \In \La_2$, and we know the probes for $\La_2$ and want to find probes for $\La_1$. 
\end{enumerate}



\subsection{Coherent-Constructible Correspondence }
Let $\Sigma \In \R^n$ be a toric fan, $X_\Sigma$ be the corresponding toric variety (or smooth DM toric stack). The Coherent-Constructible Correspondence (CCC) \cite{FLTZ-morelli, Ku16} is  an equivalence of dg categories 
$$ \Coh(X_\Sigma) \cong Sh^w(\T^n, \La_\Sigma)$$
where $\La_\Sigma \In T^*\T^n$ is a Lagrangian skeleton that only depends on $\Sigma$. We say $\La_\Sigma$ is the {\bf mirror} to $X_\Sigma$. 

\bex
(1) The mirror of $\C^*$ is the zero-section $\La=\T=S^1 \In T^*\T$. The structure sheaf $\ocal_\CS$ is sent to $p_! \C_\R$, where $p: \R \to \T$ is the quotient map. 

(2) The mirror of $\C^*$ is the union of zero section with the half cotangent fiber over $0$,  $\{(x=0, \xi<0) \in T^* \T\}$. The structure sheaf $\ocal_\C$ is sent to (up to a shift in cohomological degree) $p_! \C_{(0,\infty)}$ (or equivalently $p_! \C_{(k, \infty)}$ for any $k \in \Z$), and the structure sheaf $\ocal_0$ for $0 \in \C$ is sent to $p_! \C_{(0,1]}$. Indeed
\[ \Hom_\T(p_! \C_{(0,1]}, p_! \C_{(0,1]}) = \Hom_\R( \C_{(0,1]}, p^! p_! \C_{(0,1]}) = \oplus_{k\in \Z}\Hom_\R( \C_{(0,1]}, \C_{(k,k+1]}) = \C \oplus \C[-1] \]
where the contribution for $\C[-1]$ is from summand $k=1$. 

(3) The mirror of $\P^1$ the union of zero section with the cotangent fiber $T_0^* \T$. The structure sheaf $\ocal_{\P^1}$ is sent to $\C_{\{0\}}$, $\ocal_{\P^1}(1)$ to $p_! \C_{ (0,1)}[1]$ and $\ocal_{\P^1}(-1)$ to $p! \C_{[0,1]}$. Here the choice of the cohomological degree is fixed by the requirement that tensor product on $Coh(\P^1)$ is sent to convolution product on $Sh(\T, \La)$. 
\eex

Now we establish some notations for toric geometry and describe the mirror skeleton $\La_\Sigma$ for a simplicial stacky fan $\Sigma$ following \cite{FLTZ-morelli}.

Let $\NN$ be a rank $n$ lattice, $\NN \cong \Z^n$ after we fix a basis of $\NN$. Let $\MM = \Hom_\Z(\NN, \Z)$ be the dual lattice. We use $\NN_\R$ for $\NN \otimes_\Z \R$, and similarly for $\NN$ replaced by $\MM$, $\R$ replaced by $\T, \C^*$. 

We recall the following definitions. 
\begin{enumerate}
\item  A convex polyhedral cone (abbreviated as 'cone' later) $\sigma \subset\NN_\R$ is a set of the form $\sigma = \cone(S) = \{\sum_{u \in S} \lambda_u u \mid \lambda_u \geq 0 \}$, where the cone generator $S \subset\NN_\R$ is a finite subset. A cone $\sigma$ is {\em rational} if there is a generator $S$ for $\sigma$ such that $S \subset N$. A cone is {\em strongly convex} if it does not contain any non-trivial linear subspace of $\NN_\R$.

\item A cone $\sigma \In \NN_\R$ is {\em simplicial} if $\sigma$ is generated by linearly independent vectors in $\NN_\R$. It is {\em smooth} if $\sigma$ is generated by a subset of $\Z$-basis of $\NN$. Let $\la -, - \ra$ denote the pairing $\MM \times \NN \to \Z$, and the induced other pairings, e.g $\MM_\T \times \NN \to \T$. 

\item Let $\sigma$ be a cone, we define the dual (closed) cone $\sigma^\vee$ as
\[ \sigma^\vee: = \{ x \in\MM_\R \mid \la x, y \ra \geq 0, \forall y \in \sigma \}. \]
We also define $\sigma^\perp = \{ x \in \MM \mid \la x, y \ra = 0, \forall y \in \sigma \} \subset\MM_\R$, and $\sigma^o$ (resp. $(\sigma^\vee)^o$) as the relative interior of $\sigma$ (resp. $\sigma^\vee$).
\item A face of a cone $\sigma$ is the subset $H_m \cap \sigma$ for some $m \in \sigma^\vee$ and $H_m = m^\perp$. We use $\sigma(r)$ to denote the collection of $r$-dimensional faces of $\sigma$. 
\item  A fan $\Sigma$ in $\NN_\R$ is a finite collection of strongly convex rational polyhedral cones $\sigma \subset\NN_\R$, such that (a) if $\sigma \in \Sigma$ then any face of $\sigma$ is in $\Sigma$, and (b) if $\sigma_1,\sigma_2$ are cones in $\Sigma$ then $\sigma_1 \cap \sigma_2$ is a face in both $\sigma_1$ and $\sigma_2$. We use $\Sigma(r)$ to denote the collection of $r$-dimensional cones in $\Sigma$. 
\item A fan $\Sigma$ in $\NN_\R$ is {\em complete}, if its support $|\Sigma| := \cup_{\sigma \in \Sigma} \sigma$ is the entire $\NN_\R$.  A fan $\Sigma$ is {\em simplicial (resp. smooth)}, if each maximal cone $\sigma \in \Sigma(n)$ is simplicial (resp. smooth). 
\end{enumerate}

\bd
A smooth stacky fan $(\Sigma, \{v_\rho\})$ is a simplicial fan $\Sigma$ together with a choice of non-zero lattice vector $v_\rho \in \rho$ for each ray $\rho \in \Sigma(1)$. We sometimes abbreviate $(\Sigma, \{v_\rho\})$ as $\Sigma$. 
\ed

Let $\Sigma$ be a smooth stacky fan. We recall the definition of the (non-equivariant) FLTZ skeleton
\[ \Lambda_{\Sigma} : = \bigcup_{\sigma \in \Sigma} \La_\sigma, \,  \z{ where } \quad \La_\sigma: = \MM_\T^\sigma \times (-\sigma) , \quad \MM_\T^\sigma: = \{ \theta \in \MM_\T \mid \la \theta, v_\rho \ra = 0,   \forall \rho \in \sigma(1) \}. \]
Thanks to $\sigma$ being simplicial, if $\sigma$ is $k$-dimensional, then $\MM_\T^\sigma$ is $(n-k)$-dimensional. 

Let $p: \MM_\R \to \MM_\T$ be the quotient map, which is a covering map with fiber $\MM$. Then $p$ induces a covering map on cotangent bundle, which we also denoted as $p: T^* \MM_\R \to T^*\MM_T$. Then we define the equivariant FLTZ skeleton $\wt \La_\Sigma$ as
\[ \wt \Lambda_{\Sigma} = p^{-1}(\Lambda_{\Sigma})
\]

The smooth DM toric stack for  $\Sigma$ is denoted by $\xcal_\Sigma$. Then non-equivariant and equivariant CCC says
\[ \tau: Coh(\xcal_\Sigma) \cong Sh^w(\MM_\T, \La_\Sigma), \quad \z{ and }  \tau_{eq}: Coh_{(\CS)^n}(\xcal_\Sigma) \cong Sh^w(\MM_\R, \wt \La_\Sigma). \]

\section{Variation of Lagrangian Skeletons}

Let $M$ be a real analytic manifold, $\La \subset T^*M$ a conical Lagrangian. We assume $\La$ always contains the zero-section $T^*_M M$, hence $\La$ is determined by its Legendrian boundary at infinity $\La^\infty \In T^\infty M$. 

In this section, we are going to consider a family of Lagrangians $\{\La_t\}_{t \in \R}$ and the corresponding wrapped (constructible) sheaves supported on it $Sh^w(M, \La_t)$. We are interested in how $Sh^w(M, \La_t)$ varies as $\La_t$ moves.  If $Sh^w(M, \La_t)$ remains invariant, we say the variation of  $\La_t$ is {\bf non-characteristic}, following Nadler \cite{nadler2015non}. For example, Nadler has shown that there exists non-characteristic deformation from arbitrary Legendrian singularities into certain particular nice ones called arboreal singularities. 

It is convenient to put the family of skeletons into one big skeleton $\La_\R \subset M \times \R$, such that the restriction of $\La_\R$ to the slice $M \times \{t\}$ equals $\La_t$,  and consider the categories $Sh^w_{\La_\R}(M \times J)$ for $J \subset \R$ open. This brings us to consider the following more general problem, that given $\La \subset T^* Z$ a skeleton, how will $Sh^w_\La(U)$, or equivalently $Sh^\dm_\La(U)$,  change as we vary the open subsets $U \In Z$.  

\subsection{Singular Support for Categories}

\bd[Singular Supports] \label{d:SScat}
Let $\La$ be a conical Lagrangian in $T^*M$. We define the {\bf singular support $SS(Sh^\dm_\La) \subset T^*M$ of the (co)sheaf of categories $Sh^\dm_\La$} as follows. A point $(x,\xi) \in T^*M$ is not in $SS(Sh^\dm_\La)$, if for any open neighborhood $U$ of $x$, there exists a smaller neighborhood $B$ of $x$ and smooth function $f: B \to \R$ such that $f(x)=0, df(x) = \xi$ and the natural restriction
\[ \rho_{f}: Sh_\La^\dm(\{f(x)<\epsilon\}) \to Sh_\La^\dm(\{f(x)< -\epsilon\}) \]
is an equivalence. We also abbreviate $SS(Sh^\dm_\La)$ to $SS(\La)$.
\ed

Recall that a functor $F: \ccal \to \dcal$ is an equivalence if and only if $F$ is fully-faithful and essentially surjective. And if $F$ admits a left adjoint, we have the following sufficient condition for essential surjectivity. 
\bl\label{lm:ess}
Let $F^L: \dcal \to \ccal$ denote the left adjoint of $F$. If $F^L$ is fully-faithful, then $F$ is essentially surjective. 
\el
\bpf
Let $X$ be an object in $\dcal$, by adjunction we have morphism $X \to F F^L X$ in $\dcal$. Since for any object $Y$ in $\dcal$, we have 
\[ \Hom(Y, X) \cong \Hom(F^L Y, F^L X) \cong \Hom(Y, F F^L X) \]
induced by the morphism $X \to F F^L X$, we see $X$ is quasi-isomorphic to $F F^L X$, hence $F$ is essentially surjective. 
\epf

\bd \label{d:SScat2}
As in the setup of Definition \ref{d:SScat}, we define $SS_{Hom}(\La)$ (resp. $SS_{Hom}^L$) in the same way as $SS(\La)$,  except we replace the condition that "$\rho_f$ is an equivalence" by "$\rho_f$ is fully-faithful" (resp. "$\rho_f^L$ is fully-faithful").
\ed

\bp \label{p:2cond}
\[SS(\La) = SS_{Hom}(\La) \cup SS_{Hom}^L(\La) \]
\ep
\bpf
Since if a functor is an equivalence, then its left-adjoint is also an equivalence. Thus if $(x,\xi) \notin SS(\La)$, then $(x,\xi) \notin SS_{Hom}(\La)$ and $(x,\xi) \notin SS_{Hom}^L(\La)$. Hence the inclusion $\supset$ is clear. 

On the other hand, if $(x,\xi) \notin SS_{Hom}(\La) \cup SS_{Hom}^L(\La)$, then by definition \ref{d:SScat2},  $\rho_f$ is a fully-faithful and $\rho_f^L$ is fully-faithful, hence $\rho_f$ is an equivalence of category by Lemma \ref{lm:ess}, thus $(x,\xi) \notin SS(\La)$.  Hence the inclusion $\subset$ is clear. 
\epf

There is another equivalent definition of $SS_{Hom}$. 
\bd
Let $\La_1, \La_2$ be two conical Lagrangian skeleton in $T^*M$, then
$$ \uhom(\La_1, \La_2) = \bigcup \{ SS(\uhom(F, G)) \mid  F \in Sh_{\La_1}^\dm(U), G \in Sh_{\La_2}^\dm(U), U \z{ open in } M \}.  $$
\ed

\bl\label{lm:sshom2}
\[ SS_{Hom}(\La) = \uhom(\La, \La) \]
\el
\bpf
 Recall $SS_{Hom}(\La)$ measures the failure of fully-faithfulness of the restriction functor, and $\uhom(\La, \La)$ measures the failure of the invariance of local section of hom sheaf between any two local objects, hence the two Lagrangian skeleton are the same. 
\epf

Let $M$ be a compact manifold, $B$ any manifold,  and $\pi: M \times B \to B$ be the projection, and $\La \In T^*(M\times B)$ be a conical Lagrangian. Then we may define a sheaf of categories on $B$ by pushing forward $Sh^\dm_{\La}$ on $M \times B$, 
$$ \pi_* Sh^\dm_{\La}:  U \mapsto Sh^\dm_{\La_B}(\pi^{-1}(U)). $$
Let $SS(\pi_* Sh^\dm_{\La})$ be defined in the same as in Definition \ref{d:SScat}. We have the following standard estimate
\[ SS(\pi_* Sh^\dm_{\La}) \subset \pi_* SS(Sh^\dm_{\La}) := \{ (b,\beta) \in T^*B \mid \exists (x, \xi; b, \beta) \in SS(Sh^\dm_{\La}), \xi=0\} \]

\subsection{Singular Support for Hom Sheaf}

First, we recall two results from \cite{KS} about upper bound of singular support of hom sheaf. 
\bl[\cite{KS}, Proposition 5.4.14] \label{lm:easybound}
Let $F, G$ are constructible sheaves on $M$. If $SS^\infty(F) \cap SS^\infty(G) = \emptyset$, then we have
\[ SS(\uhom(F, G)) \In SS(G) + SS(F)^a, \]
where $(-)^a: (x, \xi) \mapsto (x,-\xi)$ is the anti-podal map on $T^*M$ and $+$ is the fiberwise sum in $T^*M$. 
\el

\bl[\cite{KS}, Corollary 6.4.5, Remark 6.2.8] \label{lm:KShom2}
Let $F, G$ are constructible sheaves on $M$. Then we have
\[ SS(\uhom(F, G)) \In SS(G) \hat{+} SS(F)^a\]
where $(x_o, \xi_o) \in SS(G) \hat{+} SS(F)^a$ if and only if there exists sequences $\{(x_n, \xi_n)\}$ in $SS(G)$ and $\{(y_n, \eta_n)\}$ in  $SS(F)$, such that as $n \to \infty$, we have 
$ (i) x_n \to x_o, \quad y_n \to x_o$, 
$(ii) \xi_n - \eta_n \to \xi_o$, and 
$(iii) |x_n - y_n| |\xi_n| \to 0$
\el

Immediately, we get
\bc
\[ \uhom(\La, \La) \In \La \hat + \La^a \]
\ec
However, this bounded is not sharp. 

\bex \label{ex:cool}
Consider the following Lagrangian skeleton. We have  $\uhom(\La, \La) \subsetneq \La \hat + \La^a$. 
\begin{center}
\begin{tikzpicture}
\begin{scope}
\draw [righthairs] (-2,0) -- (2,0) (0,0) -- (0,2);
\draw [fill, opacity=0.5] (0,0) -- +(0:0.2) arc (0:-90:0.2);
\node at (0, -1) {$\La$}; 
\end{scope}

\begin{scope}[shift={(5,0)}]
\draw [righthairs, lefthairs] (-2,0) -- (2,0) (0,0) -- (0,2);
\draw [fill, opacity=0.5, thick] (0,0) -- +(0:0.2) arc (0:360:0.2);
\node at (0, -1) {$\La \hat + \La^a$}; 
\end{scope}

\begin{scope}[shift={(10,0)}]
\draw [righthairs, lefthairs] (-2,0) -- (2,0) (0,0) -- (0,2);
\draw [fill, opacity=0.5] (0,0) -- +(-90:0.2) arc (-90:180:0.2);
\node at (0, -1) {$\uhom(\La, \La)$}; 
\end{scope}

\end{tikzpicture}
\end{center}
where the shaded sector means the corresponding direction in the cotangent fiber $T_0^* \R^2$. 
\eex

The idea to improve is to perturb $G$ by a positive flow (e.g. a geodesic flow), so that $SS(G)^\infty \cap SS(F) = \emptyset$, then apply Lemma \ref{lm:easybound}.

\bex
Consider again the setup in Example \ref{ex:cool}. The translation of $\La^\infty$ in the direction $(1,-1)$ is a positive isotopy of Legendrian.  

\begin{center}
\begin{tikzpicture} [scale=0.7]
\begin{scope}
\def\sf{0.2}
\draw [righthairs, shift={(\sf, -\sf)}] (-2,0) -- (2,0) (0,0) -- (0,2);
\draw [fill, opacity=0.5, shift={(\sf, -\sf)}] (0,0) -- +(0:0.2) arc (0:-90:0.2);
\draw [lefthairs] (-2,0) -- (2,0) (0,0) -- (0,2);
\draw [fill, opacity=0.5] (0,0) -- +(180:0.2) arc (180:90:0.2);
\draw [fill, opacity=0.5, shift={(\sf, 0)}] (0,0) -- +(0:0.2) arc (0:90:0.2);
\node at (0, -1) {$\La_{t} + \La^a$}; 
\end{scope}

\draw [arrowstyle, -stealth'] (3,0) -- (7,0); 
\node at (5,0.5) {$t \to 0^+$};

\begin{scope}[shift={(10,0)}]
\draw [righthairs, lefthairs] (-2,0) -- (2,0) (0,0) -- (0,2);
\draw [fill, opacity=0.5] (0,0) -- +(-90:0.2) arc (-90:180:0.2);
\node at (0, -1) {$\uhom(\La, \La)$}; 
\end{scope}

\end{tikzpicture}
\end{center}
\eex

Let $I = (0, \epsilon)$ for some $\epsilon>0$. Consider the open immersion and closed embedding
\[ j: M \times (0,\epsilon) \into M \times [0, \epsilon) \hookleftarrow M \times \{0\}: i. \]
We define the {\bf boundary value functor} to be
$$  \lim_{t \to 0^+}= i^{-1} j_*: \quad  Sh(M \times I) \to Sh(M), \quad F_I \mapsto F_0 = i^{-1} j_*(F_I). $$

 Consider space $M_1 \times M_2 \times I$, where $M_i = M$ and the subscript is only for bookkeeping sake, and $\pi_{M_i}$ or $\pi_{M_i \times I}$ the corresponding projection. 
\bd
Let $A_I \In M_1 \times M_2 \times I$ be a sub-analytic closed subset, $A_t := A_I \cap \pi_I^{-1}(t)$. Let $K = \C_{A_I}$ and $K_t = K|_{\pi_I^{-1}(t)}$. We say $K$ is a 
{\bf (upper-shriek) kernel for positive isotopy} if the following is true
\begin{enumerate}
\item For any $x \in M, t \in (0,\epsilon)$, $i = 1, 2$, $\pi_{M_i \times I}^{-1}(x,t)  \cap A_I$ is a  compact contractible set containing $x$. 
\item For $0 < t < s < \epsilon$, we have $A_t \subset A_s$. 
\item The intersection $A_0 := \bigcap_{0<t<\epsilon} A_t$ is  the diagonal subset $\Delta_M \In M \times M$.  
\item If $(x, \xi; y, \eta; t, \tau) \in SS(K)$ and is not contained in the zero section, then 
\[ \tau \geq 0, \quad \xi \neq 0, \quad \eta \neq 0. \]
\end{enumerate}
\ed

The kernel of positive isotopy acts as follows (see \cite{STW2016combinatorics} Appendix for a summary of the funtorial operation of sheaves)
\[ K^!: Sh(M_1) \to Sh(M_2 \times I), \quad K^! G : = (\pi_{M_2 \times I})_* \uhom(K, \pi_{M_1}^! G)\]
and 
\[ K_t^!: Sh(M_1) \to Sh(M_2), \quad K_t^! G : = (\pi_{M_2})_* \uhom(K_t, \pi_{M_1}^! G)\]
We have $$ G_t:=K_t^! G = (K^! G)|_{M_2 \times \{t\}}. $$ We call $G_t$ a {\bf positive isotopy of} $G$. And G is the boundary value of $G_I$ by construction
$$ G = \lim_{t \to 0^+} G_I. $$

For $0 < t < s < \epsilon$, we have canonical morphisms
\[ \rho_{t,s}: G_t \to G_s \]
called {\bf continuation morphisms} and is compatible with composition. Hence we have a projective system of sheaves. 

\bp \label{pp:projlim}
If $\{G_t\}_{0<t<\epsilon}$ is a positive isotopy of sheaves for $G=:G_0$, then the followings are true
\begin{enumerate}

\item The projective limit of $\{G_t\}_{0<t<\epsilon}$ in $Sh^\dm(M)$ exists and is $G$, 
\[ G = \varprojlim G_t,  \]

\item 
For any constructible sheaf $F \in Sh^\dm(M)$, we have projective limit of the projective system $\varprojlim \uhom(F, G_t)$, 
\[  \uhom(F, G) = \varprojlim \uhom(F, G_t) \]
\end{enumerate}
\ep

\bpf For (1), we observe that $K_0 = \C_{\Delta_M}$ is the inductive limit of $K_t$, $\uhom(K_t, \pi_{M_1}^! G)$ is the projective limit of $G$, and finally $\pi_{M_2 *}$ preserves projective limit.  (2) follows from (1), since hom and internal hom preserves projective limit. 
\epf

We can use the projective limit to get an upper bound of the singular support.
\bp\label{pp:SSproj}
If $G = \varprojlim G_t$ is the projective limit of a family of sheaves $G_t$, then we have
$$ SS(G) \subset \limsup_{t \to 0^+} SS(G_t) := \bigcap_{t_0 > 0} \overline{ \bigcup_{0 < t < t_0} SS(G_t) } 
$$
\ep
\bpf
Suppose $(x, \xi)$ is not in $\limsup_{t \to 0^+} SS(G_t)$, then there exists a conic open set $U$, and a $t_0>0$, such that for all $t < t_0$ $U$ is disjoint from $ SS(G_t)$. Hence we may choose an open ball $B$ around $x$, such that $B \subset \pi(U)$, and $f: B \to \R$, with $f(x)=0$, $df(x)=\xi$ and $\Gamma_{df} \subset U$. For small enough $\delta>0$, we consider the cone $P:= \C_{\{f(x)<-\delta\}} \to \C_{\{f(x)<\delta\}} $, then we have $\Hom(P, G_t) = 0$ for $t < t_0$. Since $\Hom(P, -)$ commutes with projective limit, we have $\Hom(P, G)=0$. This shows $(x,\xi)$ is not in $SS(G)$. 
\epf

\bt \label{thm:SSuhom}
Let $F, G$ be constructible sheaves over $M$ and $\{G_t\}_{t \in (0,\epsilon)}$ a positive isotopy of $G$. Then 
$$ SS(\uhom(F, G)) \subset \limsup_{t \to 0^+} SS(\uhom(F, G_t)). $$
In particular, if $SS^\infty(F) \cap SS^\infty(G_t) = \emptyset$ for $t \in (0, \epsilon)$ then 
$$ SS(\uhom(F, G)) \subset \limsup_{t \to 0^+} SS(F)^a + SS(G_t). $$
\et
\bpf
From Proposition \ref{pp:projlim}, we see $\uhom(F, G)$ is the projective limit of $\uhom(F, G_t)$. Then the conclusion follows from Proposition \ref{pp:SSproj}. The special case follows from Lemma \ref{lm:easybound}. 
\epf

\subsection{Left non-characteristic condition}
Here we introduce a technical condition that ensures $\Hom(F_t, G)$ to be invariant for an isotopy of sheaves $\{F_t\}$, even if $SS^\infty(F_t) \cap SS^\infty(G) \neq \emptyset$ for some $t$. 

First, we recall what is an isotopy of sheaves.  Let $I \In \R$ be an open interval, $M$ be a compact manifold, $F_I$ a constructible sheaf on $M \times I$. We say $F_I$ is an {\bf isotopy of sheaves} if $SS^\infty(F_I) \cap SS^\infty(\C_{M \times \{t\}}) = \emptyset$ for all $t \in I$. 

\bd
Let $F_I$ be an isotopy of sheaves, $G$ a constructible sheaf on $M$. We say $F_I$ is left non-characteristic (LNC) with respect to $G$, if there exists a positive isotopy of $G$, $\{G_s\}$, $s \in (0,\epsilon)$, such that
$$ SS^\infty(F_t) \cap SS^\infty(G_s) = \emptyset, \forall t \in I, s \in (0, \epsilon). $$
\ed

\bp
As in the above setup, if $F_I$ is LNC with respect to $G$, then 
 $\Hom(F_t, G)$ is invariant for $t \in I$.
\ep
\bpf
By non-characteristic deformation lemma (see e.g. \cite{}), we have $\Hom(F_t, G_s)$ to be invariant (up to quasi-isomorphism) for all $t \in I$ and $s \in (0,\epsilon)$. 

Then, we have $\Hom(F_t, G) = \varprojlim_s \Hom(F_t, G_s) = \Hom(F_t, G_{s_0})$ for some fixed $s_0$. Hence $\Hom(F_t, G)$ are invariant. 
\epf

\section{The Window Skeletons}
In this section, we will apply CCC and describe skeletons mirror to $\Coh(\xcal), \Coh(\xcal_\pm)$ and $\bcal_W$. They will be called {\bf full} skeleton, positive (or negative) {\bf cylindrical} skeleton and {\bf window} skeleton, respectively. 


\subsection{The full skeleton and the GIT quotient skeleton}
First, we recall the  fan $\Sigma_N$ for $\C^N$.  $\Sigma_N$ consist of all the faces of the closed positive quadrant $\R_{\geq 0}^N$
\[ \Sigma_N = \{ \sigma_I \mid I \subset [N] \}, \quad \sigma_I = \cone(e_i^\vee, i \in I ) \}.  \]

Consider the 1-PS $\CS \into (\CS)^N$ given by $a \in (\Z^N)^\vee$. It induces $\Z^\vee \into (\Z^n)^\vee$ and  $\mu_\Z: \Z^N \onto \Z$, and we define $\MM$ and $\NN$ by
\[ 0 \to \Z \xto{\cdot a} (\Z^N)^\vee \xto{q_\Z} \NN \to 0,  \] 
\[ 0 \to \MM \to \Z^N  \xto{\mu_\Z} \Z \to 0.  \]
Since $a$ is a primitive vector, $\NN$ and $\MM$ are free abelian group. We denote $\NN_A = \NN \ot_\Z A$ and similarly for $\MM_A$, for abelian group $A = \R, \C^*,\T = \R/\Z$. 

We define sub-fans $\Sigma_N^\pm \subset \Sigma_N$: 
$$ \Sigma_N^\pm := \{ \sigma_I \mid I \In [N], \z{s. t. } [N]_\pm \not \In I \}. $$
Consider the quotient map 
$$ q_\R: (\R^N)^\vee \to \NN_\R $$
Define the image of $(\Sigma_N^\pm, \{e_i^\vee: \sigma_{\{i\}} \in \Sigma_N^\pm\})$ under the quotient by $(\Sigma_\pm, \{v_i\})$. 
This is a smooth stacky fan. 

Let $\xcal_\pm$ denote the $(N-1)$-dimensional smooth toric DM stack for $\Sigma_\pm$. And let $\wt \xcal_\pm = (\C^N)^{ss}_\pm \subset \C^N$ denote the smooth open subvariety for $\Sigma_N^\pm$.

Consider the torus fibration
$$ \MM_\T \to \R^N / \MM \xto{\mu} \R. $$
Denote 
$$ \ycal = \R^N / \MM, \quad \ycal_t = \mu^{-1}(t). $$
For $t \in \Z$, $\ycal_t \cong \MM_\T$ canonically. 

The $(\C^*)^N$-equivariant skeleton for $\C^N$ is $\La_N \subset T^* \R^N$
\[ \La_N = \bigcup_{\sigma_I \in \Sigma_N} (\Z^N + \sigma_I^\perp) \times (-\sigma_I) \In T^*\R^N. \]
The $\C^*$-equivariant skeleton for $\C^N$ is the skeleton for $\xcal =[\C^N / \C^* ]$, 
\[ \La_N / \MM = \bigcup_{\sigma_I \in \Sigma_N} [{(\Z^N + \sigma_I^\perp)}/{\MM}] \times (-\sigma_I) \subset T^*Y. \]
The $\C^*$-equivariant skeleton for $\wt \xcal_\pm$ is the skeleton for $\xcal_\pm =[\wt \xcal_\pm / \C^* ]$, 
\[ \La_N^\pm / \MM = \bigcup_{\sigma_I \in \Sigma_N^\pm} [{(\Z^N + \sigma_I^\perp)}/{\MM}] \times (-\sigma_I) \subset T^*Y. \]
Equivalently, we can consider the non-equivariant skeleton for the stacky fan $\Sigma_\pm$
\[ \La_\pm := \La_{\Sigma_\pm} = \bigcup_{\sigma \in \Sigma_\pm} \MM_\T^\sigma \times (-\sigma) \In T^*\MM_\T. \]
Then $\La_\pm$ is the reduction of $\La_N^\pm / \MM$ on $\ycal_t$ for $t \in \Z$. 

We call $\La_N \qo \MM$ the {\bf full skeleton}, $\La_N^\pm \qo \MM$
the {\bf cylindrical GIT quotient skeleton} and $\La_\pm$ simply the {\bf GIT quotient skeleton}.

\ss{The Window Skeleton: Periodic region}
Let $W$ be a window, i.e., a set of consecutive integers. We define the window skeleton as a union of singular supports of mirror to the objects in the B-model mirror. 

Under equivariant CCC, a $(\C^*)^N$-equivariant line bundle $\lcal_k$ on $\C^N$, i.e $(\C^*)^N$ acts on the fiber over $0$ by weight $k$, goes to a constructible sheaf $Q_{-k}$, where
$$ Q_v = \C_{v + (\R_{>0})^N}$$
the constant sheaf supported on the shifted open quadrant. 

Hence, we define a subset of $\Z^N$ by
$$ \Z^N(W) := \mu_\Z^{-1}(-W). $$
Then we define the window skeleton upstairs
$$ \wt \La_W = \bigcup_{v \in \Z^N(W)} SS(Q_v), \quad \La_W = \wt \La_W \qo \MM.$$

\bt\label{thm:slice}
Let $W$ be a window and $-W  = \{a, a+1, \cdots, b\}$. 
\begin{enumerate}
    \item If $|W| \geq \eta_+$, then  
    $$ \wt \La_W|_{(b, \infty)} = \La_N^+|_{(b, \infty)} $$
    \item If $|W| \geq \eta_-$, then $$ \wt \La_W|_{(-\infty, a)} = \La_N^+|_{(-\infty, a)}  $$
\end{enumerate}
\et
\bpf
We only prove (1) since the proof for (2) is similar. To show $\wt \La_W|_{(b, \infty)} \subset \La_N^+|_{(b, \infty)}$, we only need to show that $SS(Q_v) |_{(b, \infty)} \subset \La_N^+|_{(b, \infty)}$ for all $v \in \Z^N(W)$. We have decomposition
$$ SS(Q_v) = \bigcup_{I \subset [N]} (v + \kappa_{I^c}) \times (-\sigma_I) $$
If $[N]_+ \subset I$, then $I^c \subset [N]_-$, hence $\mu(v + \kappa_{I^c}) \leq mu(v) \leq b$, thus 
$$ SS(Q_v) |_{(b, \infty)} |_{(b, \infty)}  =  \cup_{I \subset [N], [N]_+ \not \In I} (v + \kappa_{I^c}) \times (-\sigma_I).$$
Hence $SS(Q_v) |_{(b, \infty)} \subset \La_N^+|_{(b, \infty)}$.

Next, we show $\wt \La_W|_{(b, \infty)} \supset \La_N^+|_{(b, \infty)}$. We only need to show that, for any $u \in \Z^N$, for any $I \subset [N]$ with $[N]_+ \not \In I$, 
$$ \wt \La_W|_{(b, \infty)} \supset (u + \sigma_I^\perp)|_{(b, \infty)} \times (-\sigma_I). $$ 
Again, we only need to show that for any $x \in (u + \sigma_I^\perp)|_{(b, \infty)}$, there exists a $v \in \Z^N(W)$, such that
\be \label{eq:vxi} SS(Q_v) \supset x \times (-\sigma_I). \ee
Since $x - u \in \sigma_I^\perp$, we have
$$ x_i \in \Z \quad \z{ if} i \in I. $$
We may write $x_i = [x_i] + \{x_i\}$, where $[x_i] \in \Z$ and $\{x_i\} \in [0, 1)$. Then
$$ \mu([x]) = \mu(x) - \mu(\{x\}) > b - \sum_{j} a_j \{x_j\} =  b - \sum_{j \in I^c} a_j \{x_j\} > b - \sum_{j \in I^c \cap [N]_+} a_j \{x_j\} > b - \sum_{j \in [N]_+} a_j = b- \eta_+ $$
Since $\mu([x])$ is an integer, we have $\mu([x]) \geq b- \eta_+ +1$. 
If $\mu([x]) \leq b$, then we may choose $v = [x]$; otherwise, we may choose $v = [x] - k (\sum_{j \in I^c \cap [N]_+} e_j)$ for certain positive integer $k$ such that $\mu(v) \in [a,b]$. Indeed this is possible, since $I^c \cap [N]_+ \neq \emptyset$, and $\eta_+ \geq \mu(\sum_{j \in I^c \cap [N]_+} e_j) > 0$, hence $\mu([x] - k (\sum_{j \in I^c \cap [N]_+} e_j))$ decrease with step size at most $\eta_+$, hence will eventually fall into the interval $[a,b]$ which contains at least $\eta_+$ integers. 

Now that we have a $v \in x + (\sigma_I)^\perp$ and $v_i \leq x_i$ for all $i$, we have Eq \ref{eq:vxi} as desired.


\epf

\subsection{Window Skeleton near a Lattice Point \label{ss:winskel}}
Let $W$ be a window with $-W=\{a,\cdots, b\}$. We now study the window skeleton $\wt \La_W|_{(a-\epsilon,b+\epsilon)}$ for some small $\epsilon$. This is determined by the behavior near a lattice point $v \in \Z^N(W)$. 

In general, for any $v \in \Z^N$,  we may consider the specialization of $\wt \La_W$ at $v$
$$ \wt \La_{W,v} \subset T^*( T_v \R^N) = \R^N \times (\R^N)^\vee. $$
Since the strata of $\wt \La_W$ is already conical near $v$, $\wt \La_{W,v}$ can be obtained by first restricting to a small ball centered at $v$ then extend back to $\R^N$.

We need some notation. For $I \subset [N]$, we define 
$$ P_I: = \{x \in \R^N: x_i > 0, \z{ if }  i \notin I\}, \quad \La_I = SS(\C_{P_I}). $$
We call  $I$ positive-type (resp. negative-type) if $I \subset I_+$ (resp. $I \subset I_-$), and we call $I$ mixed-type, if $I$ intersects with both $[N]_\pm$ nontrivially. 

If $\La_I \subset \wt \La_{W, v}$, we say {\bf $\La_I$ appears in $\wt \La_{W,v}$}. We denote the set of $I$ where $\La_I$ appears in $\wt \La_{W,v}$ as $\ical_{W,v}$. 

\bp \label{pp:La_I}
$\La_I$ appears in $\wt \La_{W,v}$ if and only if there exists a vertex $w \in \Z^N(W)$, such that 
\[ 
\bcs
w_i < v_i & i \in I \\
w_i = v_i & i \notin I
\ecs
\] 
\ep

\bpf
Since $\wt \La_W$ is a union of $\La_{w}$ for each $w \in \Z^N(W)$. And only if $w \leq v$ can $\La_{w}$ contribute to $\La_W|_v$. For $\La_{w}$ contribute as $\La_I$, we need the quadrant $Q_w$ to equal to $Q_I$ near $v$, hence the conditions on $w_i$. 
\epf

\bl\label{lm:mix}
Let $v \in \Z^N(W)$ and $I \subset [N]$. If $I$ is of mixed type,  or if $I = \emptyset$, then $\La_I$ appears in $\wt \La_{W,v}$. 
\el
\bpf
By previous proposition, it suffices to choose $w$ with stated property. If $I=\emptyset$, then we may choose $w=v$. If $I$ is of mixed type, then it suffices to find positive integers $c_i$ for all $i \in I$, such that 
$ \sum_{i \in I} a_i c_i = 0.$ It is easy to see such solution exists. Then, we may choose $w = v - \sum_{i \in I} c_i {\bf e_i}$. 
\epf

\bl \label{lm:mix2}
Let $v \in \Z^N(W)$ and $I \subset [N]$. If $I$ is of positive-type or negative-type, then $\La_I$ appears in $\wt \La_{W,v}$ if and only if the vertex $w = v - \sum_{i \in I} a_i$ is in $\Z^N(W)$. 
\el
\bpf
From Proposition \ref{pp:La_I}, we may verify the 'if' part of the statement using the $w$ given. For the 'only if' part, suppose $\La_I$ appears in $\wt \La_{W,v}$, then there is $w' = v - \sum_{i \in I} a_i c_i$ for positive integers $c_i$, and $\mu(w') \in -W$.   If $I$ is of positive-type, we have $\mu(v) > \mu(w) \geq \mu(w')$; and if $I$ is of negative-type, we have $\mu(v) < \mu(w) \leq \mu(w')$, in either case we have $\mu(w)$ sandwiched between two integers in a set of consecutive integers $-W$ , hence $\mu(w) \in -W$. 
\epf

\bl \label{lm:leftright}
Let $v \in \Z^N$ and $I \subset [N]$.
\begin{enumerate}
    \item If $|W| \geq \eta_+$ and $\mu(v) > \max(-W)$, then $\La_I$ appears in $\wt \La_{W,v}$ if and only if $I$ is of mixed-type or $I$ is of positive-type.  
    \item If $|W| \geq \eta_-$ and $\mu(v) < \min(-W)$, then $\La_I$ appears in $\wt \La_{W,v}$ if and only if $I$ is of mixed-type or $I$ is of negative-type. 
\end{enumerate}
\el
\bpf
We only prove the statement (1), the other one is similar. 

If $I$ is of mixed-type or positive-type, then we just need to find a $w \in \Z^N(W)$ satisfies the conditions in Proposition \ref{pp:La_I}. Suppose $w = v - \sum_{i \in I} c_i e_i$, for positive integers $c_i$, then 
$ \mu(w) = \mu(v) - \sum_{i \in I} c_i a_i$. We need to show that there exists $c_i$s such that $\mu(w) \in -W$. Let 
$$  \alpha_I=  \sum_{i \in I} a_i, \quad \beta_I = \min ( \Z_{>0} \cap \{ \sum_{i \in I} a_i c_i \mid c_i \geq 0 \} ).$$
Then $\beta_I \leq \eta_+$ and $\alpha_I \leq \eta_+$, and
$$ \{ \mu(v) - \sum_{i \in I} c_i a_i \mid c_i \in \Z_{>0} \} \cap -W = \{ \mu(v) - \alpha_I - n \beta_I \mid n \geq 0\} \cap -W \neq \emptyset. $$
\epf

\section{Proof of Theorem \ref{thm:gen}}
So far we have defined the window skeleton $\La_W$ for a given window $W$, and considered the category $Sh^w(\ycal, \La_W)$. 
There is another potentially different definition of window category, which is the CCC image of $\bcal_W$
$$ \acal_W: = \la \{ \tau(\lcal_k), k \in W \} \ra \subset Sh^w(\ycal, \La_\ycal) \ra. $$
There is a fully-faithful embedding  
$$ \iota_W:  \acal_W \into Sh^w(\ycal, \La_W).$$ 
Our goal in this section is to prove the Theorem \ref{thm:gen}, which says, $\iota_W$ is an equivalence if $|W| \geq \eta_-$.

\begin{proof}
Let $W$ be a window, $-W = \{a, a+1, \cdots, b\}$. Assume $|W| \geq \eta_-$. 

For $v \in \Z^N$, let $\fcal(v)$ denote the probe sheaf for point $v + \epsilon \bfone$ in $Sh^w(\R^N, \wt \La_W)$. The collection of probe sheaves $C = \{ \fcal(v) \mid v \in \Z^N\}$ classically generates \footnote{In what follows, we will simply say 'generate' instead of 'classically generate', not to be confused with 'compactly generate'. } $Sh^w(\R^N, \wt \La_W)$.  Our goal is to show that a sub-collection $C(-W)$ also generates, where we use notation
$$ C(J) = \{ \fcal(v) \mid v \in \Z^N, \mu(v) \in J\}, \quad J \subset \Z. $$ 

We will prove in two steps 
\begin{enumerate}
    \item First, we show that $C( (-\infty, b] \cap \Z)$ is generated by $C(-W)$. 
    \item Next, we show that $C$ is generated by $C( (-\infty, b] \cap \Z)$.  
\end{enumerate}

In both steps, we induct on $\mu(v)$. Our induction hypothesis is that, for $\mu(v) \in [k,b]$ where $k \leq a$, $\fcal(v)$ is generated by $C(-W)$.  For the base case, the hypothesis holds for $k=a$. We show if it holds for $k$, then it holds for $k-1$. Let $v \in \Z^N$ with $\mu(v) = k-1$. Consider the microlocal stalk functor for point $(x,\xi) = (v + \epsilon e_{[N]_+}, - e^\vee_{[N]_-})$, it is co-represented by the following chain of probe sheaves
$$ \fcal(v) \to \bigoplus_{J \subset [N]_-, |J|=1} \fcal(v - e_J) \to \cdots \to \bigoplus_{J = [N]_-} \fcal(v - e_J). $$
Since the point $(x,\xi) \notin \wt \La_W$, the above chain complex is acyclic. Hence $\fcal(v)$ is generated by $\fcal(v - e_J)$ for $\emptyset \neq J \subset [N]_-$. The $\mu$-weight of the generators are
$$ \mu(v - e_J) = \mu(v) - \sum_{j \in J} a_i e_j = \mu(v) + \sum_{j \in J} |a_i| e_j $$
Thus $\mu(v - e_J) \in [k, k-1+\eta_-] \subset [k, b]$, hence $\fcal(v)$ is generated by $C([k,b] \cap \Z)$ and in turn is generated by $C(-W)$. 

The second step is similar. Our induction hypothesis is that, for $k \geq b$,  $C( (-\infty, k] \cap \Z)$ is generated by $C( (-\infty, b] \cap \Z)$. It holds for the base case $k=b$. Suppose it holds for $k$, we now consider $\mu(v) = k+1$. By similar argument, we have acyclic chain complex
$$ \fcal(v) \to \bigoplus_{J \subset [N]_+, |J|=1} \fcal(v - e_J) \to \cdots \to \bigoplus_{J = [N]_+} \fcal(v - e_J). $$
Hence $\fcal(v)$ is generated by $ \fcal(v - e_J)$ with $\emptyset \neq J \subset [N]_+$. Since for such $J$, we have $$ \mu(v - e_J) \in \mu(v) + [-\eta_+, -1], $$ $\fcal(v)$ is generated by $C( (-\infty, k]$, hence by $C( (-\infty, b] \cap \Z)$. 
\end{proof}

\bex
Here is an example where $\iota_W$ fails to be an equivalence. Consider $\C^*$ acts on $\C^3$ with weights $(1,1,-2)$, and let $W=\{0\}$ a window of size $1$. The category $\bcal_W$ is equivalent to $\Perf(\C^2/\Z_2)$, where $\C^2/\Z_2$ is the singular affine variety $\spec (\C[x,y,z]/(xy-z^2))$. Then $Sh^w(\ycal, \La_W) \cong \Coh(\C^2/\Z_2)$. Clearly, we have a discrepancy here. 
\eex

\section{Fully-Faithfulness of the Restriction Functor}
Let $W$ be a window, $\La_W \subset T^*Y$ be the window skeleton, and let $\pi: Y \to \R$ be the torus fibration with fiber $\MM_\T$. 
If $I$ is an open interval, we define
$$ \ccal(I) := Sh^\dm_{\La_W}(\pi^{-1}(I)), \ccal^w(I):= Sh^w_{\La_W}(\pi^{-1}(I)).$$
If $I \subset J$ are two intervals, we have restriction functor $\rho_{I,J}$ ,  its left adjoint $\rho_{I,J}^L$ and the restriction $\rho^w_{I,J}$ to compact objects:
$$ \rho_{I,J}: \ccal(J) \to \ccal(I), \quad \rho_{I,J}^L: \ccal(I) \to \ccal(J), \quad \rho_{I,J}^w: \ccal^w(I) \to \ccal^w(J).$$

In this section, we shall study when the restriction functor $\rho_{I,J}$ is fully-faithful. In the next section, we shall study when the co-restriction functor $\rho_{I,J}^L$ (or equivalently $\rho^w_{I,J}$) is fully-faithful.

\subsection{Hom-Sheaf's Singular Support}
We shall use Theorem \ref{thm:SSuhom} to obtain an upper bound. 

First, we define a kernel to create positive isotopy. Since the space is $\R^N$ and the skeleton $\La_N$ is rectilinear, we can use a convolution kernel
$$ K = \C_{ \{ (x,y,t)| t \in (0, 1), x,y \in \R^n, |x_i-y_i| \leq t\}}. $$
If $F \in Sh^\dm(\R^N, \La_N)$, then $K_t^! F = (T_t)_* F$ (the dimension $1$ case is clear), where $T_t: \R^N \to \R^N$ is translation by $-t(1,\cdots,1)$. If $\La \In \La_N$, then let $T_t \La$ denote the positively translated skeleton.

Since the conormal to the boundary of the open set $\pi^{-1}( (a,b))$ are in the direction $\pm a$, hence we are only interested in whether $(x,\pm a) \in \uhom(\wt \La_W, \wt \La_W)$. Since $a$ is not conormal to any coordinate planes of any positive dimension, hence we only need to look at the lattice points. 

Recall the notation of $\La_I$ from section \ref{ss:winskel}. 
\bl
Let $v \in \Z^N$, and $\wt \La_{W,v}$ be the specialization of $\wt \La_W$ at $v$. Then 
\begin{enumerate}
    \item $a \in  \uhom(\wt \La_W, \wt \La_W)_{v}$ only if there exists $I, J \subset [N]$, such that $\La_{I}, \La_{J}$ appears in $\wt \La_{W,v}$ and $I = [N]_-$, $ J \subset [N]_+$.
    \item Dually, $-a \in  \uhom(\wt \La_W, \wt \La_W)_{v}$ only if there exists $I, J \subset [N]$, such that $\La_{I}, \La_{J}$ appears in $\wt \La_{W,v}$ and $I = [N]_+$, $ J \subset [N]_-$.
\end{enumerate} 
\el
\bpf
We only prove the statement about $+a$. Let $\ical_{W,v}$ denote the collection of subset $I\subset [N]$, such that $\La_I \In \wt \La_{W,v}$. Then we have 
$$ \wt \La_{W,v} = \bigcup_{I \in \ical_{W,v}} \La_I. $$
Hence 
\be \label{eq:homhom} \uhom(\wt \La_{W,v}, \wt \La_{W,v}) \subset \bigcup_{I, J \in \ical_{W,v}} \lim_{t \to 0^+} [(\La_I)^a + T_t  \La_J]. \ee
We are interested in which pair $I,J$ can produce a co-vector $a$. Recall that $ T_t \La_J$ is the outward conormal to the open set
$$ T_t P_J  = \{ x \in \R^N: x_i > - t \;\forall i \in J^c \}), $$
and $\La_I^a$ is the inward conormal to 
$$ P_I = \{ x \in \R^N: x_i > 0 \;\forall i \in I^c \}). $$
Hence for $(\La_I)^a + T_t  \La_J$ to contain a covector that has each component nonzero, we need to have $T_t P_J  \cap P_I$ to be a vertex corner, i.e. $I^c \cup J^c = [N]$. The corner $\bfx$ has coordinate $x_i = 0 $ if $i \in I^c$ and $x_i = -t$ if $x \in I \cap J^c$. The fiber of $(\La_I)^a + T_t  \La_J$ over $\bfx$ is then $\sigma_{I^c} - \sigma_I$, hence it can contain $a$ if and only if $I^c = [N]_+$ and $I = [N]_-$. 

Hence, if $a \in  \uhom(\wt \La_W, \wt \La_W)_{v}$, we have $I, J \In \ical_{W,v}$, with $I = [N]_-$ and $J \cap I = \emptyset$, thus $J \subset [N]_+$. 
\epf

We conclude this section with the following theorem. 
\begin{theo} \label{thm:FF} Let $W$ be a window. 
\begin{enumerate}
    \item $(v, +a) \in \uhom(\wt \La_W, \wt \La_W)$ if and only if $v \in \Z^N(W)$ and $\mu(v) + \eta_- \in -W$. 
    \item If $(v, -a) \in \uhom(\wt \La_W, \wt \La_W)$, then $v \in \Z^N(W)$ and $\mu(v) - \eta_+ \in -W$.  
\end{enumerate}
\end{theo}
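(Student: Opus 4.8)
The plan is to bootstrap from the lemma just proved, which already shows that $(v,\pm a)\in\uhom(\wt\La_W,\wt\La_W)$ can occur only at a lattice point $v$ (as $a$ has all coordinates nonzero, hence is conormal to no positive-dimensional coordinate subspace) and that it forces $\La_{[N]_-}$, resp.\ $\La_{[N]_+}$, to appear in $\wt\La_{W,v}$. Proposition \ref{pp:La_I} then produces a vertex $w\in\Z^N(W)$ with $w=v-\sum_{i\in[N]_-}c_ie_i$ (resp.\ $w=v-\sum_{i\in[N]_+}c_ie_i$) for integers $c_i\ge 1$. Applying $\mu$, and recalling $\mu(e_i)=a_i$, the choice $c_i\equiv 1$ gives the extremal value $\mu(v)+\eta_-=\mu(v-e_{[N]_-})\le\mu(w)$ in the $+a$ case, and $\mu(v)-\eta_+=\mu(v-e_{[N]_+})\ge\mu(w)$ in the $-a$ case, while $\mu(w)\in -W$.

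\emph{Forcing $v\in\Z^N(W)$.} This is the delicate point, and here the preceding lemma is not enough: $\La_{[N]_-}$ can perfectly well appear at $v$ with $\mu(v)\notin -W$, yet then $(v,+a)\notin\uhom$, so the bound coming from the lemma overshoots. I would argue that when $\mu(v)\notin -W$ the local restriction functor across the wall $\{\mu=\mu(v)\}$ near $v$ is an equivalence. Every $0$-dimensional stratum of $\wt\La_W$ is the corner of an orthant skeleton $SS(Q_u)$ with $u\in\Z^N(W)$, so it lies at a $\mu$-height in $-W$; hence when $\mu(v)\notin -W$ the germ of $\wt\La_W$ at $v$ has no $0$-dimensional stratum at all, and consists only of strata each invariant under translation along some coordinate axis. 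Wrapping $G$ by the rectilinear convolution kernel $K$ and applying Theorem \ref{thm:SSuhom}, a covector $\pm a$ over $v$ could only be produced by a pair of such cylindrical strata whose base loci meet transversally over a point of $\mu$-height $\mu(v)$, and a case analysis with Lemmas \ref{lm:mix}, \ref{lm:mix2} and \ref{lm:leftright} rules this out unless $\mu(v)\in -W$. Once $v\in\Z^N(W)$, the inequalities above combined with $-W$ being an interval of consecutive integers sandwich $\mu(v\mp e_{[N]_\mp})$ between $\mu(v)$ and $\mu(w)$, both in $-W$, and hence $\mu(v)+\eta_-\in -W$ (resp.\ $\mu(v)-\eta_+\in -W$). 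This settles the ``only if'' direction of (1) and all of (2).

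\emph{The sufficient direction of (1).} Assume $v\in\Z^N(W)$ and $\mu(v)+\eta_-\in -W$. As $J$ ranges over the subsets of $[N]_-$, the integer $\mu(v-e_J)=\mu(v)+\sum_{j\in J}|a_j|$ runs monotonically from $\mu(v)$ to $\mu(v)+\eta_-$, so $v-e_J\in\Z^N(W)$ for every such $J$; consequently every term of the Koszul complex $\pcal_{[N]_-}(v)$ is a shifted-quadrant sheaf attached to a point of $\Z^N(W)$, so $\pcal_{[N]_-}(v)\in Sh^\dm_{\wt\La_W}$, and it realizes (up to shift) the microlocal skyscraper along the stratum $\La_{[N]_-}$, which by Lemma \ref{lm:mix2} now appears at $v$. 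Pairing $\pcal_{[N]_-}(v)$ with $Q_v\in Sh^\dm_{\wt\La_W}$ and computing $\uhom(\pcal_{[N]_-}(v),Q_v)$ termwise through the explicit $\uhom(Q_u,Q_{u'})$ --- each a shift of a constant sheaf on a closed polyhedral region --- one checks that $(v,+a)$ indeed lies in $SS\bigl(\uhom(\pcal_{[N]_-}(v),Q_v)\bigr)\subseteq\uhom(\wt\La_W,\wt\La_W)$.

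The main obstacle is the second paragraph: since the crude upper bound from the preceding lemma is genuinely not attained, one must prove fully faithfulness of the local restriction functor when $\mu(v)\notin -W$ by a direct, if tedious, local study of the window skeleton and its microlocal generators, using the full strength of Lemmas \ref{lm:mix}--\ref{lm:leftright} rather than the single lemma just before the theorem.
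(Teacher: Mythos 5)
Your overall route — the preceding lemma plus Proposition~\ref{pp:La_I} plus Lemmas~\ref{lm:mix}--\ref{lm:leftright} to pin down $v\in\Z^N(W)$ and the height condition, then an explicit sheaf pair for the ``if'' direction — is the same as the paper's, but two of your steps have problems.

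\textbf{The ``only if'' direction: you underestimate the preceding lemma.} You say the lemma ``overshoots'' because $\La_{[N]_-}$ can appear at $v$ with $\mu(v)\notin -W$. That is true (e.g.\ when $\mu(v)<\min(-W)$ with $|W|\geq\eta_-$), but it misses the other half of the lemma's conclusion: it also requires some $\La_J$ with $J\subset[N]_+$ to appear. When $\mu(v)<\min(-W)$, Lemma~\ref{lm:leftright}(2) (with ``positive/negative type'' understood to exclude $\emptyset$, as in Proposition~\ref{pp:posneg}) shows that \emph{no} $\La_J$ with $J\subset[N]_+$ appears, not even $\La_\emptyset$ (which by Proposition~\ref{pp:La_I} appears iff $v\in\Z^N(W)$). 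When $\mu(v)>\max(-W)$ it is $\La_{[N]_-}$ that cannot appear. So the preceding lemma together with Lemma~\ref{lm:leftright} already forces $v\in\Z^N(W)$; the detour through ``no $0$-dimensional stratum at all, only cylindrical strata'' is unnecessary, and the claim that every $0$-dimensional base stratum of $\wt\La_W$ lies at $\mu$-height in $-W$ is not justified (corners of $\overline{Q_{u_1}}\cap\overline{Q_{u_2}}$ sit at $\max(u_1,u_2)$ coordinatewise, which need not lie in $\Z^N(W)$). Once $v\in\Z^N(W)$, your sandwich argument to get $\mu(v)+\eta_-\in -W$ is fine and is essentially the proof of Lemma~\ref{lm:mix2}, which the paper simply cites.

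\textbf{The ``if'' direction is left unverified.} You propose pairing $\pcal_{[N]_-}(v)$ with $Q_v$ and computing $\uhom$ term by term, but you only assert ``one checks that $(v,+a)$ lies in the singular support''; this is exactly the non-trivial content and is not easier than the direct computation. The paper takes the much simpler pair $\fcal=Q_v$, $\gcal=Q_{v-e_{[N]_-}}$ — both of which have singular support in $\wt\La_W$ precisely under the hypothesis $\mu(v),\,\mu(v)+\eta_-\in -W$ — and computes $\uhom(\gcal,\fcal)$ in closed (product) form, from which $(v,a)\in SS(\uhom(\gcal,\fcal))$ is immediate. You should either carry out your computation or use the simpler pair.
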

\bpf
We prove only (1).

{\bf The 'only if' part. } By assumption there exists $I, J \in \ical_{W,v}$ and $I = [N]_-$ $J \subset [N]_+$. If $v \notin \Z^N(W)$, then by Lemma \ref{lm:leftright}, even if we enlarge $W$ so that $|W|$ is large enough, we cannot have both negative-type and positive-type (or emptyset) appears in $\wt \La_{W,v}$, hence $v \in \Z^N(W)$. Then, by Lemma \ref{lm:mix}, we may take $J = \emptyset$, and  by Lemma \ref{lm:mix2} $I = [N]_-$ requires $\mu(v) + \eta_- \in -W$. 

{\bf The 'if' part. } Consider $\fcal = Q_v$, $\gcal = Q_{v - e_{[N]_-}}$, by assumption $SS(\fcal), SS(\gcal) \In \wt \La_W$. Then a direct calculation verifies that
$$ \uhom(\gcal, \fcal) =  (\boxtimes_{i \in [N]_+} \C_{ \{ x \geq v_i\} }) \boxtimes (\boxtimes_{i \in [N]_-} \C_{ \{ x > v_i\} }). $$
Hence the singular support is a product form, and its fiber over $v$ is 
$$ SS(\uhom(\gcal, \fcal))_v = \sigma_{[N]_+} \times (-\sigma_{[N]_-}) $$
Hence $a \in SS(\uhom(\gcal, \fcal))_v \subset \uhom(\wt \La_W, \wt \La_W)_v$.
\epf

\section{Fully-Faithfulness of the Co-restriction Functor}
We continue with the setup of the last section. Our main result is that, 
\bt
If the $|W| \geq \eta_+$, then the co-restriction functor
$$ \rho_{I, J}^L: \ccal(I) \to \ccal(J) $$
is fully-faithful, for any $I \subset J \subset \R$.
\et

More precisely, we have the following proposition. 
\bp \label{pp:SSwHom}
Let $W$ be a window, and $v \in \Z^N$. Then we have
\begin{enumerate}
    \item If $\mu(v) \in -W$, then the co-vectors $(v, \pm a)$ are not in $SS^L_{Hom}(\wt \La_W)$. 
    \item If $\mu(v) > \max(-W)$, and $|W| \geq \eta_+$,  then the co-vectors $(v, \pm a)$ are not in $SS^L_{Hom}(\wt \La_W)$. 
    \item If $\mu(v) < \min(-W)$, and $|W| \geq \eta_-$,  then the co-vectors $(v, \pm a)$ are not in $SS^L_{Hom}(\wt \La_W)$.  
\end{enumerate}
\ep
If any of the above three conditions are satisfied, we say that {\bf the window $W$ is thick enough for $v$}. 

We are going to study the probe sheaves locally near each lattice point. To verify a cotangent vector $(v, a) \in T^* \R^N$ is not in $SS^L_{Hom}(\wt \La_W)$, it suffices to check that $a \notin SS^L_{Hom}(\wt \La_{W,v})_0$, where $\wt \La_{W,v}$ is the specialization of the Lagrangian skeleton $\wt \La_W$ at $v$ and lives in $T^*(T_v\R^N)$.


\subsection{Stalk-Probe Sheaves Near a Lattice point}

The skeleton $\wt \La_W$ is contained in $\La_N$. We shall first study the stalk probe sheaf for $\La_{N,v}$. The projection image of $\La_{N,v}$ cut $\R^N$ into $2^N$ different quadrants, which we label as
\[ Q_I = \{ x \in \R^N \mid x_i > 0 \z{ if } i \notin I; x_i < 0 \z { if } i \in I \}, \; \forall I \In [N]. \] 
For example 
\[ Q_\emptyset = (\R_{>0})^N, \quad Q_{[N]} = (\R_{<0})^N. \]
We also define some 'wedge' spaces
\[ P_I = \{ x \in \R^N \mid x_i > 0 \z{ if } i \notin I \}, \; \forall I \In [N]. \] 

Since if $(x,\xi) \in \La_{N,v}$, then $\xi_i \leq 0$, the stalk at $x$ is quasi-isomorphic to the stalk at $x+\epsilon(1,\cdots,1)$ for any small enough positive $\epsilon$. Hence, for any point $x \in \R^N$, we say $x+0 \in Q_I$ if $x+\epsilon(1,\cdots,1) \in Q_I$ for any small enough positive $\epsilon$. For example if $x = 0 \in \R^N$, then $x+0 \in Q_\emptyset$. 

For any fixed $Q_I$, all the point $p$ with $p+0 \in Q_I$ has the same stalk probe sheaves, hence we will talk about stalk probe sheaves for region $Q_I$ instead for individual points. 

\bp \label{pp:fullLaN}
Let $I \subset [N]$. The stalk probe sheaf for region $Q_I$ in category of $Sh^\dm(\R^N, \La_{N,v})$ is $\C_{P_I}$. 
\ep
\bpf


The category $Sh^\dm(\R^N, \La_{N,v})$ is equivalent to the module category over the poset of $\{J: J \In [N]\}$, hence the collection of objects $S:= \{\C_{P_I}: I \In [N] \}$ forms a compact generator of $Sh^\dm(\R^N, \La_{N,v})$. Let $p$ be an interior point of $Q_I$.  Since for any $\C_{P_J} \in S$, we have restriction map that is an isomorphism
$$ \Hom(\C_{P_I}, \C_{P_J}) \cong \C_{P_J}|_p \cong \bcs \C & I \In J \\ 0 & \z{ else } \ecs. $$
Hence $\Hom(\C_{P_I},-)$ is the stalk functor at $p$.

\epf

Recall that $\La_I = SS(\C_{P_I})$. 

\bl \label{lm:LaI1}
Let $W$ be a window, $v \in \Z^N$ and $I \subset [N]$. 
If $\La_I \In \wt \La_{W,v}$, then the stalk probe sheaf for $Q_I$ is $\C_{P_I}$. 
\el
\bpf
Since $\wt \La_{W,v} \In \La_{N,v}$, hence $Sh^\dm(\R^N, \wt \La_{W,v})$ embeds fully-faithfully into $Sh^\dm(\R^N, \La_{N,v})$. Hence a stalk probe sheaf $\pcal$ in $Sh^\dm(\R^N, \La_{N,v})$ with $SS(\pcal) \In \wt \La_{W,v}$ is automatically a stalk probe sheaf for $Sh^\dm(\R^N, \wt \La_{W,v})$ for the same region. 
\epf

To see what happens if $\La_I$ does not appear in $\wt \La_W$, we first consider a simpler situation. 
\bl\label{lm:hg}
Consider the following skeleton over $\R^m$
$$ \La_{m,0}' = \bigcup_{\emptyset\neq I \In [m]} SS(\C_{P_I}). $$
Then the stalk probe sheaf for $Q_\emptyset$ in $Sh^\dm(\R^m,  \La_{m,0}' )$ is 
\[ \fcal_{\emptyset} := \uwave{\bigoplus_{|I| = 1} \C_{P_I} } \to \bigoplus_{|I| = 2} \C_{P_I}  \to \cdots \to  \bigoplus_{|I| = m} \C_{P_I} \]
where the underlined entry is at cohomological degree $0$, and the differential is given by the natural inclusion with alternating signs.

Furthermore, the stalks of $ \fcal_{\emptyset}$ are
\[
( \fcal_{\emptyset})_p = \bcs \C & p \in (\R_{>0})^m \\
\C[-m+1] & p \in   (\R_{\leq 0})^m \\
0 & \z{ elsewhere }
\ecs
\]
\el
\bpf 
Consider the skeleton 
$$ \La_{m,0} = \bigcup_{ I \In [m]} SS(\C_{P_I}). $$
Then $\La_{m,0}  \RM \La_{m,0}'$ is a smooth Lagrangian strata $L$ in the cotangent fiber $T^*_0(R^m)$. 
The microstalk functor for $L$ (unique up to a degree shift) in $Sh^\dm(\R^m, \La_{m,0})$ can be represented as a chain complex of stalk functors $\Phi_I$ for region $Q_I$: 
\[ \Phi_L:= \Phi_\emptyset \to \bigoplus_{|I| = 1} \Phi_I \to  \cdots \to  \bigoplus_{|I| = m} \Phi_I. \]
After restricting to the subcategory $Sh^\dm(\R^m, \La'_{m,0})$, the microstalk functor $\Phi_L$ is zero, hence the chain complex of stalk functors become acyclic.  Thus the co-representing objects  $\fcal_I$ for $\Phi_I$ in $Sh^\dm(\R^m, \La'_{m,0})$ forms an acyclic chain complex:
\[ \fcal_\emptyset \to  \bigoplus_{|I| = 1} \fcal_I \to  \cdots \to  \bigoplus_{|I| = m} \fcal_I \]
After recognizing $\fcal_I = \C_{P_I}$ for $|I|>0$ (c.f. Lemma \ref{lm:LaI1}),  we have the claimed resolution of $\fcal_\emptyset$.  

Next, we verify the claim for the stalks of $\fcal_\emptyset$. If $p \in (\R_{>0})^m$, then the augmented complex 
$$ \C \to [{\bigoplus_{|I| = 1} \C_{P_I}|_p } \to \bigoplus_{|I| = 2} \C_{P_I}|_p  \to \cdots \to  \bigoplus_{|I| = m} \C_{P_I}|_p] $$
become the Koszul complex $\boxtimes_{i=1}^m (\C \to \C)$, hence is acyclic. Thus, the stalk of the complex is $\C$. If $p \in (\R_{\leq 0})^m$, then the only non-zero entry is the last one $\C_{\R^m}|_p$, sitting at degree $m-1$. If $p$ is elsewhere, then the stalk of the complex reduces to a Koszul complex, hence is acyclic.

\epf

\bd
We call the above stalk probe sheaf $\fcal_\emptyset$ on $\R^m$ as the {\bf hourglass sheaf of dimension $m$}, and denoted as $\hourglass_m$. 
\ed

Now, we are ready to continue to describe the stalk probe sheaf for $\wt \La_W$ near $v$. 

Let $[N] = \{1,\cdots, N\}$, and the subsets $[N]_+ = \{i: a_i > 0\}$ and $[N]_- = \{i: a_i < 0\}$. For any subset $I \subset [N]$, we denote $I_\pm = I \cap [N]_\pm$. We denote $I_\pm^c = [N]_\pm \RM I_\pm$. We decompose $\R^N$ as 
$$ \R^N = \R^{N_+} \times \R^{N_-}. $$
If $\fcal_+ \in Sh( \R^{N_+} )$ and $\fcal_- \in Sh( \R^{N_-} )$, we will write $\fcal_+ \boxtimes \fcal_-$ for the product sheaf in $\R^N$. If particular, if $J_+ \subset [N]_+$, we will write $P_{J_+} \in Sh(\R^{N_+})$.

\bp \label{pp:posneg}
Let $v \in \Z^N$, $I \subset [N]$. Assume the window $W$ is thick enough for $v$, and $\La_I$ does not appear in $\wt \La_W$. Then exactly one of following cases happen \ref{lm:leftright})
\begin{enumerate}
    \item $I \subset [N]_+$ and for any $\emptyset\neq K \In [N]_-$, $\La_{I \sqcup K} \In \wt\La_{W,v}$. 
    \item $I \subset [N]_-$ and for any $\emptyset\neq K \In [N]_+$, $\La_{I \sqcup K} \In \wt\La_{W,v}$. 
\end{enumerate}

\ep
In case (1) (resp. (2)) , we say $(v,I)$ is {\bf of positive (resp. negative) type}. 

\bpf
We enumerate all possible cases of $(v,I)$. If $v \in \Z^N(W)$, and $\La_I$ does not appear, then $I \neq \emptyset$ and $I \subset [N]_+$ or $I \subset [N]_-$. Suppose $I \In [N]_+$, then for any $\emptyset\neq K \In [N]_-$, $I \sqcup K$ is of mixed type, hence $\La_{I \sqcup K}$ appears in $\wt\La_{W,v}$ by Lemma \ref{lm:mix}. 

If $v \notin \Z^N(W)$, and if $\mu(v) > \max(-W)$, then $I = \emptyset$ or $I$ is of negative type, by Lemma \ref{lm:leftright}. We claim that we are in case (2), sicne for any $\emptyset\neq K \In [N]_+$, $I \sqcup K$ is of mixed type, hence $\La_{I \sqcup K}$ appears in $\wt\La_{W,v}$. We cannot be in case (1), since even if $I=\emptyset$ then adding a non-empty negative type subset $K$ still makes $I \sqcup K$ a negative type, hence $\La_{I \sqcup K} \not \In \wt\La_{W,v}$.

The case for $v \notin \Z^N(W)$ and $\mu(v) < \min(-W)$ is similar. We claim that we are in case (1) and omit the proof. 
\epf

\bp \label{pp:prod}
Let $W$ be a window,  $v \in \Z^N$ and $I \subset [N]$. Assume the window $W$ is thick enough for $v$, and $\La_I$ does not appear in $\wt \La_{W,v}$. Then  
\begin{enumerate}
    \item If $(v, I)$ is of positive type, then the stalk probe sheaf for region $Q_I$ is
\[ \fcal_{I} = \bigoplus_{|J_-| = 1} \fcal_J \to \bigoplus_{|J_-| = 2} \fcal_J  \to \cdots  \to \bigoplus_{|J_-| = N_-} \fcal_J \cong \C_{P_{J_+}} \boxtimes \hourglass_{N_-},  \]
where  $J = J_+ \sqcup J_-$, and $J_+ = I_+=I$. 
    \item If $(v, I)$ is of negative type, then the stalk probe sheaf for region $Q_I$ is
\[ \fcal_{I} = \bigoplus_{|J_+| = 1} \fcal_J \to \bigoplus_{|J_+| = 2} \fcal_J  \to \cdots  \to \bigoplus_{|J_+| = N_+} \fcal_J \cong \hourglass_{N_+} \boxtimes \C_{P_{J_-}},  \]
where  $ J = J_+ \sqcup J_-,$ and $J_- = I_- = I$. 
\end{enumerate}
\ep
\bpf
We only prove  (1) since the proof of (2) is exactly the same. 
Let $p \in Q_I$ with $p_i = +1$ if $i \notin I$ and $p_i = -1$ if $i \in I$. Consider the affine subspace $M = p + \R^{N_-}$.  The restriction of $\La_W$ on $M$ is the same as in Lemma \ref{lm:hg}, with $m=N_-$. Hence, we have an acyclic chain complex of stalk functors
\[ \Phi_I \to \bigoplus_{|J_-| = 1} \Phi_J \to \bigoplus_{|J_-| = 2} \Phi_J  \to \cdots  \to \bigoplus_{|J_-| = N_-} \Phi_J \]
where $J$ has the same positive part $J_+$ as $I$, and $J_-$ runs through all subsets of $[N]_-$. Hence the stalk probe sheaves forms the similar chain complex, replacing $\Phi_J$ by $\fcal_J$. Thanks for the condition that $(v,I)$ is of positive type, we see for $|J_-| \geq 1$, $\La_J = \La_{I \sqcup J_-}$ appears in $\wt \La_{W,v}$, and 
$$ \fcal_J = \C_{P_J} = \C_{P_{J_+}} \boxtimes \C_{P_{J_-}}, \; \forall |J_-| \geq 1. $$ 
Thus, we may pull out the common factor $\C_{P_{J_+}}$ and get $\fcal_I \cong \C_{P_{J_+}} \boxtimes \hourglass_{N_-}$.
\epf

\begin{corollary}
Let $v \in \Z^N$ and $W$ is a window thick enough for $v$. For any $I \In [N]$, let $\fcal_I$ be the probe for region $Q_I$. 
The category $Sh^w(\R^N, \wt \La_{W,v})$ is split-generated by $\fcal_I$ for those $I$ such that $\La_I$ appears in $\wt \La_{W,v}$.
\end{corollary}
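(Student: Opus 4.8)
The plan is to derive this corollary as a formal consequence of the resolutions already constructed in Propositions~\ref{pp:posneg} and~\ref{pp:prod}, the only real input being that the \emph{full} collection of stalk probe sheaves $\{\fcal_I : I\subset[N]\}$ split-generates $Sh^w(\R^N,\wt\La_{W,v})$.

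\textbf{Step 1: the full collection of probes split-generates.} As in the proof of Proposition~\ref{pp:fullLaN}, the category $Sh^\dm(\R^N,\La_{N,v})$ is equivalent to modules over the Boolean poset $\{J : J\subset[N]\}$, and under this equivalence the sheaves $\C_{P_I}$, $I\subset[N]$, form a set of compact generators. Since $\wt\La_{W,v}\subset\La_{N,v}$, the fully faithful inclusion $i\colon Sh^\dm(\R^N,\wt\La_{W,v})\hookrightarrow Sh^\dm(\R^N,\La_{N,v})$ admits a left adjoint $i^L$ that preserves compact objects and satisfies $i^Li\cong\id$; hence $i^L$ is essentially surjective and sends compact generators to compact generators. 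For $p$ with $p+0\in Q_I$ and any $G\in Sh^\dm(\R^N,\wt\La_{W,v})$ one has $\Hom(i^L\C_{P_I},G)=\Hom(\C_{P_I},iG)=(iG)_p=G_p$, so $i^L\C_{P_I}$ co-represents the stalk at $Q_I$, i.e.\ $i^L\C_{P_I}\cong\fcal_I$. Therefore $\{\fcal_I\}_{I\subset[N]}$ split-generates $Sh^w(\R^N,\wt\La_{W,v})$. (Alternatively one may invoke directly Nadler's theorem \cite{nadler2016wrapped} that microlocal skyscrapers split-generate, after checking that for the rectilinear skeleton $\wt\La_{W,v}$ each microlocal skyscraper is represented by an $\fcal_I$ with $\La_I$ appearing.)

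\textbf{Step 2: discard the superfluous probes.} Fix $I$ with $\La_I$ not appearing in $\wt\La_{W,v}$. By Proposition~\ref{pp:posneg}, $(v,I)$ is of positive or negative type, and Proposition~\ref{pp:prod} presents $\fcal_I$ as a finite complex whose terms are direct sums of probes $\fcal_J=\C_{P_J}$ for which $\La_J$ appears in $\wt\La_{W,v}$ --- in the positive-type case $J=I\sqcup J_-$ with $\emptyset\neq J_-\subset[N]_-$, which is mixed-type, so $\La_J$ appears by Lemma~\ref{lm:mix} (or Lemma~\ref{lm:leftright} when $v\notin\Z^N(W)$), and dually in the negative-type case. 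Consequently $\fcal_I$ lies in the thick (triangulated, idempotent-complete) subcategory generated by the $\fcal_J$ with $\La_J$ appearing. Combined with Step~1, that thick subcategory contains every $\fcal_I$ for $I\subset[N]$, hence it is all of $Sh^w(\R^N,\wt\La_{W,v})$, which is the assertion.

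The substance of the argument lies entirely in Step~1 --- the identification $i^L\C_{P_I}\cong\fcal_I$ together with the generation statement for the full probe collection --- and in the bookkeeping check that the index sets $J$ appearing in Proposition~\ref{pp:prod} genuinely satisfy "$\La_J$ appears in $\wt\La_{W,v}$"; the rest is formal, so I do not anticipate any serious obstacle.
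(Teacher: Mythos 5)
Your argument is correct and follows essentially the same two-step approach as the paper's own proof: first that the full collection $\{\fcal_I\}_{I\subset[N]}$ split-generates (you supply the detail via the identification $i^L\C_{P_I}\cong\fcal_I$ and essential surjectivity of $i^L$, which the paper leaves implicit by citing the generation property of microlocal skyscrapers), and then that the superfluous $\fcal_I$ lie in the thick subcategory generated by the remaining ones via the resolutions of Proposition~\ref{pp:prod}. No gap.
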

\begin{proof}
Since $Sh^w(\R^N, \wt \La_{W,v})$ is split-generated by all $\{\fcal_I : I \subset [N]\}$, and any $\fcal_I$ can be written as a chain complex using only $\{\fcal_I : \La_I \subset \wt \La_{W,v} \}$, hence the smaller set of $\fcal_I$ split-generates. 
\end{proof}

\subsection{Co-restriction of stalk-probe sheaves near a lattice point}

Let $W$ be a window,  $v \in \Z^N$. 
We define half-spaces in $T_v \R^N \cong \R^N$
$$ B_\pm = \{ x \in \R^N: \pm x \cdot a > 0 \}. $$
Let $\fcal_I$ denote the stalk probe sheaf for region $Q_I$.

As a warm-up, we consider the full skeleton's specialization $\La_{N,v}$. 
\bl
The stalk probe sheaf for $Q_I \cap B_\pm$ in $Sh^\dm(B_\pm, \La_{N,v})$ is $\C_{P_I}|_{B_\pm}$, if $Q_I \cap B_\pm \neq \emptyset$. 
\el
\bpf
Consider the restriction to $B_-$ only, since the other case is similar. $Q_I \cap B_- = \emptyset$ if and only if $I = [N]_-$. The category $Sh^\dm(B_\pm, \La_{N,v})$ is equivalent to the module category over the poset $\{ I \In [N]: I \neq [N]_- \}$, hence $\{\C_{P_I}|_{B_-} \mid  I \In [N],  I \neq [N]_- \}$ compactly generate the category. The rest of the proof is the same as 
Proposition \ref{pp:fullLaN}.
\epf

\bp \label{pp:main}
Assume $W$ is thick enough for $v$. Then the stalk probe sheaf for $Q_I \cap B_\pm$ is $\fcal_I|_{B_\pm}$, if $Q_I \cap B_\pm \neq \emptyset$. 
\ep
\bpf
Consider the restriction to $B_-$ only. If $\La_I$ appears in $\wt \La_{W,v}$ and $I \neq [N]_-$, then $\fcal_I = \C_{P_I}$, $\fcal_I |_{B_-}$ is the probe for region $Q_I \cap B_-$ in $Sh^\dm(B_\pm, \La_{N,v})$, hence is also the probe for the same region in $Sh^\dm(B_\pm, \wt \La_{W,v})$. 

If $\La_I \not \In \wt \La_{W,v}$, then we consider the following two cases. 
\begin{enumerate}
    \item $(v,I)$ is of positive type, i.e $I \In [N]_+$ and $I \sqcup K \in \ical_{W,v}$ for any non-empty $K \In [N]_-$. 
    \begin{enumerate}
        \item Assume $I \neq \emptyset$.
    Consider $p \in Q_I$ of the following form, $p = - e_I + \epsilon e_{I^c}$. 
    Then $\mu(p) = - \sum_{i \in I} a_i + O(\epsilon)$, hence for small enough $\epsilon$, $\mu(p)<0$ and $p \in Q_I \cap B_-$. And the slice $ S = p + \R^{N_-}$ contains the point $p'$ where $p'_i = p_i$ for $i \in [N]_+$ and $p'_i = 0$ for $i \in [N]_-$. There are $2^{N_-}$ strata on the slice $S$ near $p'$, they belong to $Q_J$ with $J_+ = I$ and $J_- \In [N]_-$ respectively. Hence, we have an acyclic chain complex of stalk functors, resulting in an acyclic chain complex of probe sheaves 
    $$ \fcal_{Q_I \cap B_-} \to  \bigoplus_{|J_-| = 1} \C_{P_J \cap B_-} \to \bigoplus_{|J_-| = 2} \C_{P_J \cap B_-}  \to \cdots  \to \bigoplus_{|J_-| = N_-} \C_{P_J \cap B_-} $$
    where we plugged in the probe sheaf for $Q_J \cap B_-$ by $\C_{P_J \cap B_-}$ since $J \in \ical_{W,v}$.  This acylic complex can be viewed as a resolution of the first term $\fcal_{Q_I \cap B_-}$ , and the rest of the chain complex is $\fcal_I|_{B_-}$, hence we get
    $$ \fcal_{Q_I \cap B_-} \cong \fcal_I|_{B_-}.$$
    \item Assume $I = \emptyset$, this is only possible if $v \notin \Z^N(W)$, and more precisely we have $\mu(v) < \min(-W)$. We claim
    $$ SS(\C_{P_\emptyset}|_{B_-}) \In \bigcup_{i \in [N]_-} \La_{\{i\}}|_{B_-} \In \wt \La_{W,v}, $$
    Indeed, if $x \in \pa(P_\emptyset) \cap B_-$, then if we define $K = \{i: x_i \neq 0\}$, then $K \not\In [N]_+$, and 
    $SS(\C_{P_\emptyset})|_x \In \La_J|_x$ if and only if $K^c \In J^c$, i.e. $J \In K$. Hence we may choose $J = \{j\}$ for some $j \in K \cap [N]_-$. This verfies the claim. Finally, we see
    $$ \C_{P_\emptyset}|_{B_-} = [\C_{\R_+^{N_+}} \boxtimes \C_{\R_+^{N_-}} ] |_{B_-}=  [\C_{\R_+^{N_+}} \boxtimes \hourglass_{{N_-}} ] |_{B_-} = \fcal_\emptyset|_{B_-} $$
    Since $B_-$ does not intersects with the product $\R_+^{N_+} \times \R_{\leq 0}^{N_-}$, hence $\hourglass_{{N_-}}$ acts as $\C_{\R_+^{N_-}}$.
    \end{enumerate}
    \item $(v,I)$ is of negative type, i.e $I \In [N]_-$ and $I \sqcup K \in \ical_{W,v}$ for any non-empty $K \In [N]_+$. Since $Q_I \cap B_- \neq \emptyset$, $I \neq [N]_-$. 
      \begin{enumerate}
        \item Assume $I \neq \emptyset$.
    Consider $p \in Q_I$ of the following form, $p = - \epsilon e_I + e_{[N]_- \RM I} +  \epsilon e_{[N]_+}$. 
    Then $\mu(p) = \sum_{i \in [N]_- \RM I} a_i + O(\epsilon)$, hence for small enough $\epsilon$, $\mu(p)<0$ and $p \in Q_I \cap B_-$. And the slice $ S = p + \R^{N_+}$ contains the point $p'$ where $p'_i = p_i$ for $i \in [N]_-$ and $p'_i = 0$ for $i \in [N]_+$. There are $2^{N_+}$ strata on the slice $S$ near $p'$, they belong to $Q_J$ with $J_- = I$ and $J_+ \In [N]_+$ respectively. The rest of the argument is similar to case $(v,I)$ of positive type, $I\neq \emptyset$, and we omit the details. 
    \item The case $I = \emptyset$ is impossible by the proof in Proposition \ref{pp:posneg}. 
    \end{enumerate}
\end{enumerate}

\epf

\ss{Proof of Proposition \ref{pp:SSwHom}}
Let $W$ be a window, $v \in \Z^N$, assume $W$ is thick enough for $v$.
Consider the restrictions
$$ \rho_{v,\pm a}: Sh^\dm(T_v \R^N, \wt \La_{W,v}) \to Sh^{\dm}(B_\mp , \wt \La_{W,v}), \quad B_\pm =  \{x \in T_v \R^N: \pm x \cdot a > 0 \}$$
and the left-adjoint $\rho^L_{v,\pm a}$. We are going to show that $id \to \rho_{v,\pm a} \circ \rho^L_{v,\pm a}$ are isomorphisms.

This is precisely what Proposition \ref{pp:main} is about. Consider the co-restriction from $B_-$ to $\R^N$. Suppose $Q_I \cap B_- \neq \emptyset$, then by Proposition \ref{pp:main},  $\fcal_I|_{B_-}$ is the probe sheaf for $Sh^\dm(B_-, \wt \La_{W,v})$ for region $Q_I \cap B_-$. Tautologically, we have 
$$ \rho^L_{v, a} (\fcal_I|_{B_-}) = \fcal_I, $$
hence 
$$ \rho_{v, a} \circ \rho^L_{v, a} (\fcal_I|_{B_-}) = \fcal_I|_{B_-}. $$
This holds for all the compact generators of $Sh^\dm(B_-, \wt \La_{W,v})$, hence $\rho^L_{v, a}$ is an isomorphism. Similarly for $\rho^L_{v, a}$.

\section{Proof of Theorem \ref{thm:main}}
In previous two sections, we studied the under what conditions the restriction and the co-restriction functors are fully-faithful: roughly speaking, if $|W|$ is too large, then the restrction functor will not be fully-faithful; if $|W|$ is too small, then the co-restriction functor will not be fully-faithful.

In this section, we will let $|W| = \eta_+$, the minimum width of the window to ensure the co-restriction functor to be fully-faithful. To be concrete, let 
$$ -W = \{ a, a+1, \cdots, b\}, \quad b = a + \eta_+ - 1. $$
Since $SS^L_{Hom}=\emptyset$, from Proposition \ref{p:2cond} and Lemma \ref{lm:sshom2}, we have
$$ SS(\wt \La_W) = \uhom(\wt \La_W, \wt \La_W). $$
From Theorem \ref{thm:FF}, we see $(v, a) \in SS(\wt \La_W)$ if $v \in \Z^N$ and 
$\mu(v) \in [a, a+\eta)$, and $(v,-a)$ is not in $SS(\wt \La_W)$ for any $v$.  

Let $[v] = p(v) \in \R^N \qo \MM$. The above statement also holds if we replace $v$ by $[v]$ and $\wt \La_W$ by $\La_W$. Recall from section 5 that $\ccal(I) = Sh^\dm(\pi^{-1}(I), \La_W)$. 
\bp\label{p:sublevel}
\begin{enumerate}
    \item As $t$ increases from $-\infty$ to $+\infty$, $\ccal( (-\infty, t))$ is locally constant except when $t = a, a+1, \cdots,a+\eta_1$. 
    $$ \ccal( (-\infty, t)) \cong \bcs  Sh^\dm(\MM_\T, \La_-) &  \z{ if } t \ll 0 \\ 
     Sh^\dm(\MM_\T, \La_+) & \z{ if } t \gg 0
     \ecs $$
    \item As $t$ decreases from $+\infty$ to $-\infty$, $\ccal( (t, +\infty))$ remains invariant, and equals $ Sh^\dm(\MM_\T, \La_+)$. 
\end{enumerate} 
\ep
\begin{proof}
This are consequences of the description of the singular support $SS(\La_W)$. 
\end{proof}

Let $\ccal(<t)$ denote $\ccal((-\infty, t))$. 
\bp
For any $t \in \R$,  we have
$$ \ccal(<t) \cong   \ccal(t):=Sh^\dm(\MM_\T, \La_W|_t) $$
\ep
\begin{proof}
(1) If $t \notin \{a, a+1, \cdots, a+\eta-1)\}$,
then for any $\epsilon$ small enough,  we have 
$$ \ccal(<t) \cong \ccal( (t-\epsilon, t+\epsilon)). $$
Since $\ccal( (t-\epsilon, t+\epsilon)) \cong \ccal(I)$ for any open $I \subset (t-\epsilon, t+\epsilon)$,  hence the restriction $\ccal( (t-\epsilon, t+\epsilon))$ to $\ccal(t)$ is an equivalence. 

(2) If $t \in \{a, a+1, \cdots, a+\eta-1)\}$, we claim that $\ccal(t) \cong \ccal(t-\epsilon)$ for some small $\epsilon$. Given the claim, we then have $\ccal(t) \cong \ccal(t-\epsilon) \cong \ccal(<t-\epsilon) \cong \ccal(<t)$. Now we prove the claim, let $[v]$ denote the vertex in slice $\pi^{-1}(t)$, then the stratification of $\ycal_t$ compared with $Y_{t-\epsilon}$ has one region less. If we lift $[v]$ to $v$ and work in the tangent space of $v$, then the region $Q_{[N]_+}$ does not intersect with the slice $B_0$ where $B_\delta = \{x \cdot a = \delta\}$. However, $\La_{[N]_+}$ does not appear in $\wt \La_{W,v}$, hence the stalk in region $Q_{[N]_+}$ can be deduced from the stalks in other regions, $Q_I$ where $I$ is of of mixed type. Thus $\ccal(t) \cong \ccal(t-\epsilon)$ and the claim holds. 


\end{proof}

Now we consider how $\ccal((-\infty, t))$ jump as $t$ cross one of the critical moments. 
\bp
Let $t \in \{a, a+1, \cdots, a+\eta-1)\} $, then we have semi-orthogonal decomposition 
$$ \ccal((-\infty, t+\epsilon)) = \la \Vect, \ccal((-\infty, t-\epsilon)) \ra $$
for small enough $\epsilon$.
\ep
\bpf

The co-restriction functor $\rho^L: \ccal(< t-\epsilon) \to  \ccal(< t+\epsilon)$ is fully-faithful, and let $S$ denote the image of $\rho^L$. WE claim that the right-orthogonal $S^\perp$ is generated by one simple object,  thus $S^\perp \cong \Vect$ and we get the desired semi-orthogonal decomposition. Consider the Lagrangian $\wt \La_{W,v}$, for $v \in \Z^N(W)$. By assumption, $\mu(v) + \eta_- \in -W$, hence for any $I \In [N]_-$,  $\La_{I} in \wt \La_{W,v}$. Thus, consider the object in $Sh^\dm(\R^N, \wt \La_{W,v})$
$$ \C_{P_\emptyset} \to {\bigoplus_{|I| = 1} \C_{P_I} } \to \bigoplus_{|I| = 2} \C_{P_I}  \to \cdots \to  \uwave{\bigoplus_{|I| = N_-} \C_{P_I}} $$
where $I \In [N]_-$ and the underlined cell is in degree $0$. We see it is actually $\C_{\R_{>0}^{N_+}} \boxtimes \C_{\R_{\leq 0}^{N_-}}$, hence is not supported on $B_-$ and is a simple object. 
\epf

By repeat this decomposition $\eta$ times, and using Proposition \ref{p:sublevel}, we get 
\begin{corollary}
$$ Sh^\dm(\MM_T, \La_+) \cong \ccal(a+\eta) \cong \la \Vect, \ccal(a+\eta-1) \ra  \cong \cdots \cong  \la \underbrace{\Vect,\cdots, \Vect}_{\eta \z{ times }}, Sh^\dm(\MM_T, \La_-)\ra. $$
\end{corollary}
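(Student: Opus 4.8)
I would deduce the corollary formally from the two propositions just established — the equivalence $\ccal(<t)\cong\ccal(t):=Sh^\dm(\MM_\T,\La_W|_t)$ and the one-step decomposition $\ccal(<t+\epsilon)=\la\Vect,\ccal(<t-\epsilon)\ra$ at a critical value $t$ — together with the endpoint identifications of Proposition \ref{p:sublevel}. The plan is to induct downward through the critical values $a+\eta-1,\,a+\eta-2,\dots,a$, splitting off one copy of $\Vect$ at each step.

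First I would pin down the two ends. By Proposition \ref{p:sublevel}(1) the family $\ccal(<t)$ is locally constant, hence genuinely constant, on $(a+\eta-1,\infty)$ and on $(-\infty,a)$, with values $Sh^\dm(\MM_\T,\La_+)$ for $t\gg 0$ and $Sh^\dm(\MM_\T,\La_-)$ for $t\ll 0$. Applying $\ccal(<t)\cong\ccal(t)$ at $t=a+\eta$ together with this constancy gives $\ccal(a+\eta)\cong Sh^\dm(\MM_\T,\La_+)$, and likewise $\ccal(<a)\cong Sh^\dm(\MM_\T,\La_-)$.

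For the inductive step, fix $j\in\{\eta-1,\dots,0\}$, put $t_j=a+j$, and take $\epsilon>0$ small (in particular $<1$ and small enough for the decomposition proposition to apply). Since there is no critical value in the open interval $(t_j,t_{j+1})$, local constancy plus the equivalence $\ccal(<t)\cong\ccal(t)$ identifies $\ccal(<t_{j+1})$ with $\ccal(<t_j+\epsilon)$, and similarly $\ccal(<t_j-\epsilon)\cong\ccal(<t_j)$; the decomposition proposition then yields $\ccal(<t_{j+1})\cong\la\Vect,\ccal(<t_j)\ra$. Iterating and using that semi-orthogonal decompositions compose — $\la\Vect,\la\Vect,\ccal\ra\ra=\la\Vect,\Vect,\ccal\ra$ — produces the chain of equivalences
$$ Sh^\dm(\MM_\T,\La_+)\cong\ccal(a+\eta)\cong\la\Vect,\ccal(a+\eta-1)\ra\cong\cdots\cong\la\underbrace{\Vect,\dots,\Vect}_{\eta},\ \ccal(<a)\ra, $$
and combining with $\ccal(<a)\cong Sh^\dm(\MM_\T,\La_-)$ finishes the proof.

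There is no genuinely hard step here; all the substance sits in the previous sections. The points that need a little care are: (i) matching the half-open sublevel categories $\ccal(<t)$ with the closed-slice categories $\ccal(t)$ at the jump points, which is exactly the content of the $\ccal(<t)\cong\ccal(t)$ proposition (including at the critical values themselves, where its proof shows the extra stratum $Q_{[N]_+}$ contributes nothing); and (ii) invoking associativity of semi-orthogonal decompositions so that the nested brackets flatten to a single length-$\eta$ decomposition with all the $\Vect$ factors on the same (left) side, consistent with the B-model statement $\Coh(\xcal_+)=\la\Vect,\dots,\Vect,\Coh(\xcal_-)\ra$.
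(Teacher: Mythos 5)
Your argument is correct and follows exactly the paper's own route: the paper simply says ``repeat this decomposition $\eta$ times, using Proposition \ref{p:sublevel},'' and your proof fills in the same induction through the critical values $a,\dots,a+\eta-1$, using the two preceding propositions ($\ccal(<t)\cong\ccal(t)$ and the one-step decomposition) together with the endpoint identifications from Proposition \ref{p:sublevel}.
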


We can be more precise about the relations of these $\Vect$s. For $t \in \{a, a+1, \cdots, a+\eta-1)\}$, and $v \in \mu_\Z^{-1}(t)$, we may consider the chain complex 
\[ \pcal_I(v) : = Q_v \to \bigoplus_{J \subset I, |J|=1} Q_{v - e_J} \to \bigoplus_{J \subset I, |J|=2} Q_{v - e_J} \to \cdots \to Q_{v - e_I}, \]
where $I = [N]_-$. Then $SS(\pcal_I(v)) \in \wt \La_W$. And $\pcal_I(v)$ is the microstalk skyscraper sheaf for the Lagrangian smooth strata $(v + (0,1)^{I^c}) \times (-\sigma_I)$ over the corresponding face of the cube $v + [0,1]^N$. These collections of $p_! \pcal_I(v)$ corresponds to the $\Vect$ factors, and under CCC, they go to structure sheaf for the unstable loci of the $(-)$ GIT quotient, with appropriate $\C^*$-weights. We omit the detail of the computation here, since it is of the same structure as the following B-model computation. 

\subsection{Window subcategory in B-model \label{ss:b-model}}
In this section, we describe semi-orthogonal decomposition in B-model. We follow closely \cite{segal2011equivalences} and \cite[Theorem 2]{kerr2017homological}. 

Let $V = \C^N$, and we decompose $V = V_+ \oplus V_-$ according to the weight of the $\C^*$ action. Then $V_+$ is also the unstable locus for $\chi_-$. We show that $Coh(\chi_+)$ admits semi-orthogonal decomposition with component $\ocal_{V_+}$ and $Coh(\chi_-)$. 

Recall that $\ocal_{V_+}$ admits a locally free resolution, the Koszul resolution. For concreteness, we assume $V_+$ is cut-out by equation $z_1=\cdots=z_m=0$, and let $\ocal$ denote the structure sheaf of $V$, then  
$$ \ocal \cdot (z_1 \wedge \cdots z_m) \to \cdots \to \bigoplus_{i<j} \ocal \cdot (z_i \wedge z_j) \to \bigoplus_{i} \ocal \cdot z_i \to \uwave{\ocal} \cong \ocal_{V_+} $$
where the wavy underline indicate the term is of homological degree zero. The homological degree $-k$ term can be identified with $\ocal \otimes \wedge^k V_-^\vee$. One can also lift the resolution to the $\C^*$-equivariant category, if the weight of the degree $0$ term $\ocal$ is $0$, then the weight of the degree $-m$ term is $\eta_-$, since $V_-^\vee$ are have only positive weights. We denote such an equivariant lift as $\ocal_{V_+}([0, \eta_-])$, and sometimes also as $\ocal_{V_+}(0)$, where the interval indicate the ranges of the weights in the Koszul resolutions.

\begin{lemma}
In the unequivariant category, we have the following hom spaces, 
\begin{enumerate}
    \item $\Hom(\ocal_{V_+}, \ocal_{V}) \cong Sym^*( V_+^\vee ) \otimes \wedge^{\z{top}} V_-  $
    \item $\Hom(\ocal_{V}, \ocal_{V_+}) = Sym^*(  V_+^\vee )$
    \item $\End(\ocal_{V_+}, \ocal_{V_+}) = Sym^*( V_+^\vee) \otimes \wedge^* V_- $
\end{enumerate}
where $\wedge^i V_- := Sym^i(V_-[-1])$. 
\end{lemma}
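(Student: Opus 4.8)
The plan is to deduce all three statements from the Koszul resolution $K_\bullet \to \ocal_{V_+}$ recalled just above the Lemma, together with the fact that $\ocal_V$ is a free (hence projective) $\ocal_V$-module. Recall $V = V_+ \oplus V_-$, where $z_1,\dots,z_m$ form a basis of the conormal space $V_-^\vee$ (the equations cutting out $V_+$), so that $K_{-k} = \wedge^k V_-^\vee \otimes \ocal_V$ sits in homological degree $-k$ and the differential is interior multiplication by the tautological Koszul element $\theta = \sum_{i=1}^m z_i \otimes e_i^\vee \in V_- \otimes \ocal_V$, the Euler section of the trivial bundle $V_-\otimes\ocal_V$ with zero locus $V_+$. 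Since the $\Hom$ and $\End$ in the statement are derived (this is what the convention $\wedge^i V_- := Sym^i(V_-[-1])$ encodes), I will compute $\Hom_{\ocal_V}^\bullet(K_\bullet, M)$ for $M = \ocal_V$ and $M = \ocal_{V_+}$.

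For (2) there is nothing to do: $\ocal_V$ is free of rank one, so $\Hom(\ocal_V,\ocal_{V_+}) = \Gamma(\ocal_{V_+}) = Sym^*(V_+^\vee)$, concentrated in degree $0$. For (1) and (3), dualizing the Koszul complex gives the cochain complex whose $k$-th term is $\Hom_{\ocal_V}(\wedge^k V_-^\vee\otimes\ocal_V, M) = \wedge^k V_-\otimes M$ placed in cohomological degree $+k$, with differential the exterior product $\theta\wedge(-)\colon \wedge^k V_-\otimes M \to \wedge^{k+1}V_-\otimes M$. Taking $M = \ocal_V$, this is the ascending Koszul complex of the regular sequence $z_1,\dots,z_m$, so its cohomology is concentrated in top degree $k=m$ and equals $\ocal_V/(z_1,\dots,z_m)\otimes\wedge^m V_- = Sym^*(V_+^\vee)\otimes\wedge^{\mathrm{top}}V_-$; under the convention $\wedge^i V_- := Sym^i(V_-[-1])$ the object $Sym^*(V_+^\vee)\otimes\wedge^{\mathrm{top}}V_-$ is precisely this complex (sitting in degree $m$), which gives (1). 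Taking $M = \ocal_{V_+}$, the element $\theta$ restricts to $0$ (each $z_i$, $i\le m$, vanishes on $V_+$), so every differential vanishes and the complex is $\bigoplus_{k=0}^m \wedge^k V_-\otimes Sym^*(V_+^\vee)$ with the $k$-th summand in degree $k$, i.e. $Sym^*(V_+^\vee)\otimes\wedge^* V_-$; checking that the Yoneda product is carried to the exterior product on $\wedge^* V_-$ — again immediate from the Koszul model — upgrades this to an isomorphism of graded algebras and proves (3).

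I do not expect a genuine obstacle: these are the standard identities $i^!\ocal_V \cong \det N_{V_+/V}[-\codim]$ and $\Ext^*_{\ocal_V}(\ocal_{V_+},\ocal_{V_+}) \cong \wedge^* N_{V_+/V}$ for the regular embedding $i\colon V_+\hookrightarrow V$, specialized to the trivial normal bundle $N_{V_+/V} = V_-\otimes\ocal_{V_+}$. The only thing needing care is the bookkeeping: matching the cohomological shift built into $\wedge^i V_- := Sym^i(V_-[-1])$ with the degree $+k$ of the $k$-th term of the dualized Koszul complex, keeping $V_-$ and $V_-^\vee$ straight when dualizing $\Hom(\wedge^k V_-^\vee\otimes\ocal_V,-)$, and — for the equivariant refinement invoked elsewhere in the section — tracking that the $\C^*$-weights of $\wedge^k V_-^\vee$ range over $[0,\eta_-]$, though that lies outside the present non-equivariant statement.
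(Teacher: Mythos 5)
Your proof is correct: you compute $\mathrm{RHom}_{\ocal_V}(K_\bullet, M)$ directly from the Koszul resolution, and the bookkeeping of degrees under the convention $\wedge^i V_- := Sym^i(V_-[-1])$ checks out (the $\Hom(-,\ocal_V)$ complex is the ascending Koszul complex, concentrated in top degree $m$; the $\Hom(-,\ocal_{V_+})$ complex has vanishing differential because $\theta$ restricts to zero). However, the paper takes a different, shorter route: its entire proof is the one-line reduction ``it suffices to check the one-dimensional case.'' The point there is that $V = \bigoplus_i \C$ factors into coordinate lines, $\ocal_{V_+} = \bigotimes_{i\in[N]_+}\ocal_\C \otimes \bigotimes_{i\in[N]_-}\ocal_0$, the Koszul resolution is the external tensor product of rank-one Koszuls, and all three $\Hom$'s therefore factor as tensor products of the three one-dimensional computations $\Hom(\ocal_0,\ocal_\C)$, $\Hom(\ocal_\C,\ocal_0)$, $\End(\ocal_0)$, which are immediate. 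Your direct computation buys you a coordinate-free statement (it works for any regular embedding with trivial normal bundle, not just one split into lines) and makes the algebra structure in (3) visibly the exterior algebra on $V_-$ via the Koszul self-duality, whereas the paper's reduction buys brevity by exploiting the product structure that is always available in the toric setting. Both arguments are sound; the difference is one of emphasis rather than correctness.
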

\bpf
It suffices to check the one-dimensional case. 
\epf

\begin{corollary}
In the $\C^*$-equivariant category, we have
$$ \Hom_\CS(\ocal_{V}(i), \ocal_{V_+}(j)) = Sym^*( V_+^\vee)_{i-j} 
$$
and 
$$ \Hom_\CS(\ocal_{V_+}(i), \ocal_{V_+}(j)) = Sym^*( V_+^\vee \oplus V_-[-1])_{i-j} 
$$
In particular, if $i>j$, then both hom vanishes. 
\end{corollary}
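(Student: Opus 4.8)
The plan is to deduce both identities from the preceding Lemma by extracting $\C^*$-weight graded pieces. The underlying general fact is that $\C^*$ is linearly reductive over a characteristic-zero field, so for any $\C^*$-equivariant complexes $\fcal,\gcal$ of sheaves on $V$ the (derived) space $\Hom(\fcal,\gcal)$ inherits a $\C^*$-action, i.e.\ a $\Z$-grading by weight, the functor of $\C^*$-invariants is exact, and if $\fcal(i)$ denotes $\fcal$ twisted by the $i$-th power of the standard character then
\[ \Hom_\CS(\fcal(i),\gcal(j)) \;=\; \Hom(\fcal,\gcal)_{\,i-j}, \]
the weight-$(i-j)$ summand. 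So it is enough to (a) observe that the graded-vector-space identifications of the Lemma are in fact isomorphisms of $\C^*$-representations for the evident actions, and (b) read off the weight-$(i-j)$ part.

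For the first formula I would use that $\ocal_V$ is $\ocal_V$-free, so that $\Hom(\ocal_V,\ocal_{V_+}) = \Gamma(V,\ocal_{V_+}) = Sym^*(V_+^\vee)$ is concentrated in degree $0$ and, equivariantly, is just the symmetric algebra on $V_+^\vee$ with the contragredient $\C^*$-action. The displayed formula then gives $\Hom_\CS(\ocal_V(i),\ocal_{V_+}(j)) = Sym^*(V_+^\vee)_{i-j}$.

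For the second formula I would resolve the source by the $\C^*$-equivariant Koszul resolution of $\ocal_{V_+}$ normalized exactly as in the discussion of $\ocal_{V_+}([0,\eta_-])$ above, whose term in homological degree $-k$ is $\ocal_V \otimes \wedge^k V_-^\vee$. Applying $\Hom(-,\ocal_{V_+})$ sends the degree-$(-k)$ term to $Sym^*(V_+^\vee)\otimes\wedge^k V_-$ sitting in homological degree $-k$, while every Koszul differential becomes multiplication by one of the coordinate functions $z_i$, $i\in[N]_-$, all of which vanish in $\ocal_{V_+}$; hence the resulting complex has zero differential and, as $\C^*$-representations,
\[ \Hom(\ocal_{V_+},\ocal_{V_+}) \;=\; \bigoplus_k Sym^*(V_+^\vee)\otimes\wedge^k V_-[-k]. \]
Since $\wedge^k V_- = Sym^k(V_-[-1])$ and $Sym^*(A)\otimes Sym^*(B) = Sym^*(A\oplus B)$ formally, this equals $Sym^*(V_+^\vee\oplus V_-[-1])$, and taking the weight-$(i-j)$ part yields $\Hom_\CS(\ocal_{V_+}(i),\ocal_{V_+}(j)) = Sym^*(V_+^\vee\oplus V_-[-1])_{i-j}$. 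Equivalently one may simply quote part (3) of the Lemma once its $\C^*$-equivariance is recorded.

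The vanishing for $i>j$ is then immediate from the weight conventions fixed in the notation section: a coordinate function on a one-dimensional $\C^*$-representation of weight $w$ has weight $-w$, so $V_+^\vee$ is supported in weights $\{-a_i : i\in[N]_+\}\subset\Z_{<0}$ and $V_-[-1]$ in weights $\{a_i : i\in[N]_-\}\subset\Z_{<0}$; hence both $Sym^*(V_+^\vee)$ and $Sym^*(V_+^\vee\oplus V_-[-1])$ live in weights $\le 0$, so their weight-$(i-j)$ pieces vanish whenever $i-j>0$. I do not expect any real obstacle in this argument; the only points demanding care are keeping the direction of the weight twist consistent (so that one lands on $i-j$ and not $j-i$) and confirming that the Koszul differentials really die after applying $\Hom(-,\ocal_{V_+})$ — both are routine.
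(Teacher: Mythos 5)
Your proof is correct and takes essentially the same approach as the paper: apply the equivariant Hom convention $\Hom_\CS(F(i),G(j)) = \Hom(F,G)_{i-j}$, quote (or re-derive equivariantly) the Lemma's identifications, and observe that $V_+^\vee$ and $V_-$ carry only negative weights, forcing vanishing when $i>j$. The only difference is that you optionally re-run the Koszul resolution to record the $\C^*$-equivariance of the Lemma explicitly, whereas the paper takes this for granted; this is a matter of thoroughness, not of method.
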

\begin{proof}
Recall that our $\C^*$-equivariant hom convention at in section \ref{ss:notation}
$$ \Hom_\CS(F(i), G(j)) = ( \Hom (F, G) \cdot e_{j-i} )^{\C^*} $$
where $e_{j-i}$ has weight $j-i$. Hence one need to take the weight $i-j$ part of $\Hom (F, G)$. Next, $V_+^\vee$ and $V_-$ have only negative weights, hence their symmetric powers also only have negative weights. 
\end{proof}

Suppose $W = \{0, \cdots, \eta_+-1\}$ Then, we have semi-orthogonal decomposition of 
$$ \bcal_W = \Coh(\xcal_+) \cong  \la \underbrace{\ocal_{V_+}(0), \ocal_{V_+}(1), \cdots, \ocal_{V_+}(\eta -1)}_{\eta \z{ many} }, \underbrace{\la \{ \ocal_{V}(\eta),  \ocal_{V}(\eta+1), \cdots,  \ocal_{V}(\eta_+-1)\} \ra}_{ \Coh(\xcal_-)} \ra. $$
Indeed, by the previous corollary, this is a semi-orthogonal decomposition. 

We also record the following results, where the window size is more general. 

\bp
Let $W=\{a,a+1,\cdots, b-1,  b\}$ be a window of size $|W|$.  Let $\C^*$ acts on $V = \C^N$ and decompose $V = V_+ \oplus V_-$ according to the weights. Let $\eta_\pm$ be the absolute values of weights of $\det(V_\pm)$. Then
\begin{enumerate}
     \item If $|W| \geq \eta_-$, then we have semi-orthogonal decomposition, 
     $$ \bcal_W = \la \ocal_{V_+}(a), \ocal_{V_+}(a+1), \cdots,  \ocal_{V_+}(b-\eta_-), \underbrace{\la \{ \ocal_V(b-\eta_- + 1), \cdots,  \ocal_V(b) \} \ra}_{Coh(\chi_-)} \ra. $$
    \item If $|W| \geq \eta_+$, then we have semi-orthogonal decomposition
    $$ \bcal_W = \la \ocal_{V_-}(b), \ocal_{V_-}(b-1), \cdots,  \ocal_{V_-}(a+\eta_+), \underbrace{\la \{ \ocal_V(a+\eta_+-1), \cdots, \ocal_V(a) \} \ra}_{Coh(\chi_+)} \ra. $$

\end{enumerate}
\ep
\begin{proof}
The proof for (1) is the same as above. For, we may consider a $\C^*$ action on $V$ with opposite weights, then the result follows from (1). 
\end{proof}

\bibliographystyle{amsalpha}
\bibliography{HMS}

\end{document}

It is interesting to observe how the geometry of the unstable loci is reflected on the constructible sheaf size. 

From the moment map from $\C^N \to \R^N$, sending $(z_1, \cdots, z_N)$ to $(|z_1|^2, \cdots, |z_N|^2)$, we see the loci $V_-$ is mapped to the cone $\kappa_+$ under the moment map. Now, consider the CCC image of the structure sheaf $V_-$. To better compare with the geometry of the moment map, we use the $(\C^*)^N$-equivariant CCC. The support of $\ocal_{V^+}(0)$, from the Koszul resolution of $\ocal_{V^+}(0)$, is a $N_-$-dimensional unit cube times a $N_+$-dimensional cone $\kappa_+$. It is also the microlocal probe  $\pcal_{[N]_-}(e_{[N]-})$. 

The quotient image of the sheaf $p_! \tau(\ocal_{V^+}(k))$, denoted as $\tcal_k$ for $k = 0, \cdots, \eta - 1$, is responsible for the jumps fo the category $Sh^w(\T^{N-1}, \La_W|_{-k-\epsilon})$ to $Sh^w(\T^{N-1}, \La_W|_{-k+\epsilon})$, or rather on the category supported on the sublevel set $\ccal( (-\infty, -k-\epsilon)) \into \ccal( (-\infty, -k+\epsilon))$.

How does these line bundles behave under the $\C^*$-quotient? Suppose we have $\C^*$ acting on $\C^2$ with weight $(1,1)$, and we consider $\lcal_0$ and $\lcal_1$. I want to say that up to some common grading shift, we have $\lcal_0$ goes to skyscraper sheaf at $0$ (good), and $\lcal_1$ goes to the $\C_{(0,1)}[1]$. I think the restriction functor should come equipped with some degree shift, so that nice guys goes to nice guys. 

However, if you require that things are compatible with convolution, then line bundle $O(1)$ on $\P^1$ should corresponds to $\C_{(0,1)}[1]$. Suppose, you want the $\C^*$-action on fiber over $0$ to have weight $k$, that means the graph $T_0$ rotates as $t^k T_0$, then $T_1 = z T_0$ rotates like $t^{k-1} T_1$. As usual, weight $k$ goes to point $-k$ on $\R$ and weight $k-1$ goes to $-k+1$, we do located that iamge. 

What else? That's all. You now have skeleton compatible with that of FLTZ. good. 

To shift a sheaf by $\lcal_k$, on the sheaf side, is to translate the sheaf by $-k$.